\newcolumntype{C}[1]{>{\centering\hspace{0pt}}p{#1}}
\newtheorem{thm}{Theorem}[section]
\newtheorem{prop}[thm]{Proposition}
\newtheorem{lem}[thm]{Lemma}
\newtheorem{cor}[thm]{Corollary}
\theoremstyle{definition}
\newtheorem{defn}[thm]{Definition}
\theoremstyle{definition}
\newtheorem*{rmk}{Remark}
\numberwithin{equation}{section}
\title{Free-Boundary Problems for \\ Holomorphic Curves in the $6$-Sphere}
\author{Jesse Madnick}
\date{June 2021}
\newcommand{\Addresses}
{{   \bigskip
\noindent \textsc{National Center for Theoretical Sciences} \par\nopagebreak
\noindent   \textsc{National Taiwan University}\par\nopagebreak
\noindent   \textsc{Taipei, Taiwan}\par\nopagebreak
\noindent  \textit{E-mail address}: \texttt{jmadnick@ncts.ntu.edu.tw}
}}
\begin{document}

\maketitle

\begin{abstract}
We remark on two different free-boundary problems for holomorphic curves in nearly-K\"{a}hler $6$-manifolds.  First, we observe that a holomorphic curve in a geodesic ball $B$ of the round $6$-sphere that meets $\partial B$ orthogonally must be totally geodesic.  Consequently, we obtain rigidity results for reflection-invariant holomorphic curves in $\mathbb{S}^6$ and associative cones in $\mathbb{R}^7$. \\
\indent Second, we consider holomorphic curves with boundary on a Lagrangian submanifold in a strict nearly-K\"{a}hler $6$-manifold.  By deriving a suitable second variation formula for area, we observe a topological lower bound on the Morse index.  In both settings, our methods are complex-geometric, closely following arguments of Fraser-Schoen and Chen-Fraser.
\end{abstract}


\section{Introduction}


\indent \indent The $6$-sphere is the only $n$-sphere for $n \geq 3$ that admits an almost complex structure.  Viewing $\mathbb{S}^6$ as the unit sphere in $\mathbb{R}^7$, the \textit{standard almost-complex structure} $J$ at $p \in \mathbb{S}^6$ is given by $J_p(v) = p \times v$, where $\times$ is the usual the $7$-dimensional cross product on the imaginary octonions $\text{Im}(\mathbb{O}) = \mathbb{R}^7$.  While $J$ is not integrable, it does act by isometries with respect to the round metric and is invariant under the compact Lie group $\text{G}_2$.  An immersed surface $u \colon \Sigma^2 \to \mathbb{S}^6$ is a \textit{holomorphic curve} (or \textit{complex curve}) if each tangent space is $J$-invariant:
$$J(T_p\Sigma) = T_p\Sigma, \ \ \ \forall p \in \Sigma.$$
\indent Holomorphic curves in $\mathbb{S}^6$ are minimal surfaces.  One way to see this is via $\text{G}_2$ geometry.  Namely, a surface $\Sigma \subset \mathbb{S}^6$ is a holomorphic curve if and only if its ($3$-dimensional) cone $\text{C}(\Sigma) := \{rp \in \mathbb{R}^7 \colon r > 0, p \in \Sigma\} \subset\mathbb{R}^7$ is an \textit{associative $3$-fold}, one of the four classes of calibrated geometries discovered in the pioneering work of Harvey and Lawson \cite{harvey1982calibrated}.  Associative $3$-folds are fundamental objects in $\text{G}_2$ geometry, and holomorphic curves in $\mathbb{S}^6$ model their conical singularities.  In sum, if $\Sigma \subset \mathbb{S}^6$ is a holomorphic curve, then $\text{C}(\Sigma) \subset \mathbb{R}^7$ is an associative $3$-fold, so $\text{C}(\Sigma)$ is homologically volume-minimizing, and thus $\Sigma$ is a minimal surface. \\
\indent There are by now very many studies of holomorphic curves in the $6$-sphere (see the surveys \cite[$\S$19]{chen00riemannian} and \cite[$\S$12.1-$\S$12.3]{joyce2007}), though research in this direction is still ongoing (see, e.g., the recent works \cite{martins2013superminimal}, \cite{fernandez15space}, \cite{madnick2021second}).  Thus far, global questions have primarily concerned \textit{closed} surfaces.  In this paper, by contrast, we study holomorphic curves with boundary. \\
\indent Now, the last decade has seen great advances in the subject of minimal surfaces with boundary, particularly in the context of the free-boundary condition.  A minimal surface $u \colon \Sigma^2 \to B$ in a domain $B$ is \textit{free-boundary} if $u(\partial \Sigma) \subset \partial B$ and $u(\Sigma)$ intersects $\partial B$ orthogonally.  The orthogonality requirement here arises naturally from the first variation of area.  We will not attempt to survey this rapidly developing field of study --- an excellent recent overview is \cite{li2019free} --- but simply mention a particular result of interest.

\pagebreak

\indent In 2015, Fraser and Schoen \cite{fraser2015uniqueness} proved a remarkable rigidity theorem.  They showed that 
any free-boundary minimal $2$-disk $u \colon D^2 \to B$ in a geodesic ball $B$ of a real space form must be totally geodesic.  Thus, in geodesic balls $B$ of round spheres $\mathbb{S}^n$, any free-boundary minimal surface must have non-trivial topology.  In the case of $\mathbb{S}^6$, it is natural to ask what happens if the topological assumption ($\Sigma$ is a $2$-disk) is replaced by a geometric one (viz., $u(\Sigma)$ holomorphic).  In $\S3$, we show:

\begin{thm} \label{thm:Classical} Let $u \colon \Sigma^2 \to B$ be a compact immersed surface in a geodesic ball $B$ of the round $6$-sphere with $u(\partial \Sigma) \subset \partial B$.  If $u$ is a holomorphic curve, and if $u(\Sigma)$ meets $\partial B$ orthogonally, then $u(\Sigma)$ is totally geodesic.
\end{thm}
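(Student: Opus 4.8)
The plan is to adapt the Fraser–Schoen argument, replacing their use of the disk topology with the strong structural constraints imposed by holomorphicity. The key device in Fraser–Schoen is to test the stability/first-variation relations against a cleverly chosen conformal vector field (in the space-form case, a position-type vector field whose tangential restriction to a free-boundary minimal surface gives useful integral identities). I would first set up the analogous ambient vector field on the round $\mathbb{S}^6$, namely the restriction to the geodesic ball $B$ of a vector field $X$ generating dilations/translations in the model, or equivalently the gradient of the distance function from the center of $B$. The crucial point is that, because $\Sigma \subset \mathbb{S}^6 \subset \mathbb{R}^7$ is minimal, one has control over $\int_\Sigma \langle X, \nabla r \rangle$-type quantities via integration of a divergence.

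The next step is to exploit the fact that $\Sigma$ being a holomorphic curve forces its cone $\mathsf{C}(\Sigma) \subset \mathbb{R}^7$ to be an associative $3$-fold, hence calibrated, hence homologically area-minimizing in $\mathbb{R}^7$. The free-boundary (orthogonality) condition on $\Sigma \subset \partial B$ translates, at the level of the cone, into a clean boundary condition: the orthogonality of $u(\Sigma)$ to the geodesic sphere $\partial B$ at distance $R$ corresponds to the cone meeting the flat sphere $\{|x| = \text{const}\}$ (or the relevant radial slice) orthogonally as well. I would write the position vector $x$ on $\mathsf{C}(\Sigma)$, decompose it into tangential and normal parts, and use that $x$ is the radial (Euler) vector field, which is always tangent to any cone. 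This tangency is the associative-geometry analogue of the conformal-vector-field trick: it yields an exact monotonicity-type identity relating the area of $\Sigma$ to a boundary integral over $\partial \Sigma$, and the orthogonality condition makes that boundary term vanish or take an extremal value.

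From these identities I expect to derive that the second fundamental form of $\Sigma$ (equivalently, the totally-geodesic defect of the cone) must vanish identically. Concretely, I would combine the integrated first-variation identity with a Simons-type or trace computation: holomorphicity gives that the Kähler angle is identically $0$ and that $\Sigma$ is not merely minimal but superminimal-constrained in a way that rigidifies the allowable second fundamental form; pairing this with the boundary integral forced by orthogonality should pin the mean-curvature-free, calibrated surface down to a totally geodesic one. The expected outcome is an inequality that is saturated precisely when $h \equiv 0$, where $h$ is the second fundamental form of $\Sigma$ in $\mathbb{S}^6$.

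The hard part, I anticipate, will be the translation of the free-boundary orthogonality condition through the cone construction into a boundary term that one can actually control, and ensuring that the holomorphicity is used in a sharp enough way to close the argument without a topological hypothesis. In Fraser–Schoen the disk topology enters to rule out nonzero genus/boundary-flux contributions; here the holomorphic (calibrated) structure must do all of that work instead. I would therefore spend the most care verifying that the relevant boundary integral over $\partial \Sigma$ vanishes under orthogonality, and that the resulting integral inequality genuinely forces $h \equiv 0$ rather than merely bounding it. If a direct cone-calibration computation proves unwieldy, the fallback is to work intrinsically on $\Sigma \subset \mathbb{S}^6$, using the complex structure $J$ to produce a holomorphic section whose associated integral (via a Bochner/Weitzenböck or the complex analogue of the Fraser–Schoen conformal identity) forces the vanishing of the traceless second fundamental form.
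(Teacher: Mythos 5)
Your primary route has two concrete failures. First, your translation of the free-boundary condition through the cone is wrong: \emph{every} cone meets \emph{every} sphere $\{|x| = c\}$ orthogonally, precisely because the position (Euler) vector field is tangent to the cone while being normal to the sphere. So the boundary condition you propose to exploit is vacuous --- it holds for all cones and encodes nothing about orthogonality to $\partial B$. The genuine translation is that $\mathrm{C}(\Sigma)$ meets the cone over $\partial B$ orthogonally, and for a geodesic ball of radius $\neq \pi/2$ that hypersurface is a circular, non-totally-geodesic cone of revolution for which you formulate no calibration or monotonicity identity. Second, the integral identities you invoke (divergence/balancing identities obtained by testing against position-type or conformal fields) are satisfied by every free-boundary minimal surface, of any topology, totally geodesic or not; identities of that kind can never by themselves force $\text{I\!I} \equiv 0$. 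The phrases ``I expect to derive'' and ``should pin \ldots down'' occur exactly where the actual mechanism of the proof is missing. (A smaller point: for a minimal surface the second fundamental form is already trace-free, so ``forcing the traceless second fundamental form to vanish'' is not a weaker intermediate goal --- it is the whole problem, and your integral identities do not produce any pointwise vanishing from which to start.)

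What the paper actually does is purely local complex geometry, with no integrals and no cone. Since geodesic spheres in $\mathbb{S}^6$ are totally umbilic (shape operator $A = c\,\mathrm{Id}$), and since orthogonality makes the outward unit normal $\nu$ of $\partial B$ tangent to $\Sigma$ along $\partial \Sigma$, one computes along the boundary, in an adapted frame with $e_1 = \nu$ and $e_2$ tangent to $\partial\Sigma$, that
$$\text{I\!I}(e_1,e_2) = (\overline{\nabla}_{e_2}\nu)^{N\Sigma} = [A(e_2)]^{N\Sigma} = c\,(e_2)^{N\Sigma} = 0.$$
For a holomorphic curve the entire second fundamental form is encoded in two complex functions $\kappa, \mu$, and all four real components of $(\kappa,\mu)$ appear in $\text{I\!I}(e_1,e_2)$; hence $\kappa = \mu = 0$ along $\partial \Sigma$. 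These are precisely the coefficients of Bryant's section $\Phi = \kappa\,\zeta_1 \otimes \zeta_1 \otimes (f_2) + \mu\,\zeta_1 \otimes \zeta_1 \otimes (f_3)$ of $\Lambda^{1,0}\Sigma \otimes \Lambda^{1,0}\Sigma \otimes Q$, which is holomorphic for the Koszul--Malgrange structure induced by the nearly-K\"{a}hler connection $\overline{D}$ (not the Levi-Civita connection). A holomorphic section vanishing on a boundary arc vanishes identically, and $\Phi \equiv 0$ is equivalent to $u$ being totally geodesic. Your fallback paragraph (``use $J$ to produce a holomorphic section\ldots'') points in this direction, but misidentifies the tool: no Bochner/Weitzenb\"{o}ck or integral identity is needed, only pointwise vanishing of $\Phi$ on the boundary plus unique continuation. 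This also answers the worry you raise at the end about replacing the disk hypothesis: because holomorphicity of $u$ forces $\Phi$ to vanish \emph{identically} on $\partial\Sigma$ (rather than merely satisfy a reality condition there, as in Fraser--Schoen), unique continuation applies with no topological input at all.
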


\indent Note that Theorem \ref{thm:Classical} makes no topological assumptions about $\Sigma$; that $\Sigma$, if connected, is homeomorphic to a $2$-disk is part of the conclusion.  Moreover, Theorem \ref{thm:Classical} immediately yields a rigidity result for closed, connected holomorphic curves $\Sigma$ in $\mathbb{S}^6 \subset \mathbb{R}^7$ that are reflection-invariant across a hyperplane, say $P = \{x_7 = 0\} \subset \mathbb{R}^7$.  Indeed, if such $\Sigma$ meets $P$ transversely, then $\Sigma \cap \{x_7 \geq 0\}$ is free-boundary in the upper hemisphere, and hence is totally-geodesic.  That is: 

\begin{cor} \label{thm:ReflectHolo} Let $u \colon \Sigma^2 \to \mathbb{S}^6$ be a holomorphic curve, where $\Sigma^2$ is a closed connected surface, and let $P$ denote a totally geodesic $5$-sphere in $\mathbb{S}^6$.  If $u(\Sigma)$ is invariant under reflection in $P$, and if the intersection of $u(\Sigma)$ with $P$ is transverse, then $u(\Sigma)$ is a totally geodesic $2$-sphere.
\end{cor}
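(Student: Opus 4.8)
The plan is to deduce the corollary from Theorem~\ref{thm:Classical} by a reflection/doubling argument, so the main work is to verify that the free-boundary hypotheses of the theorem are genuinely met at the fixed-point locus. First I would observe that, since $P$ is a totally geodesic $5$-sphere, it is the fixed-point set of an isometric involution $\rho \colon \mathbb{S}^6 \to \mathbb{S}^6$ (reflection in $P$), and $\mathbb{S}^6 \setminus P$ has two connected components, the two open hemispheres $H_+$ and $H_-$, interchanged by $\rho$. Each closed hemisphere is itself a geodesic ball of the round $6$-sphere whose boundary is $P$, so Theorem~\ref{thm:Classical} is applicable to a free-boundary holomorphic curve in $H_+$. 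The strategy is therefore to show that $\Sigma_+ := u(\Sigma) \cap \overline{H_+}$, suitably parametrized, is a compact immersed holomorphic curve with $\partial \Sigma_+ \subset P$ that meets $P$ orthogonally; the theorem then forces $\Sigma_+$ to be totally geodesic, and by reflection symmetry the same holds for $\Sigma_- := u(\Sigma) \cap \overline{H_-}$, whence all of $u(\Sigma)$ is totally geodesic.

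The key steps, in order, are as follows. First I would use the transversality hypothesis to arrange that $u(\Sigma) \cap P$ is a disjoint union of smoothly embedded circles, so that $\Sigma_+$ and $\Sigma_-$ are compact surfaces-with-boundary glued along these curves; transversality guarantees that these pieces are manifolds-with-boundary and that the boundary is exactly $u(\Sigma)\cap P$. Second, and this is the crux, I must establish the orthogonality (free-boundary) condition: I would argue that at any point $p \in u(\Sigma) \cap P$, reflection symmetry of $u(\Sigma)$ across $P$ combined with transversality forces the tangent line $T_p(u(\Sigma) \cap P)$ to lie in $T_pP$ while the complementary direction in $T_p u(\Sigma)$ is reflected to its negative, hence lies in the normal line to $P$. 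Concretely, $\rho$ fixes $T_pP$ pointwise and acts as $-1$ on the normal line $N_pP = (T_pP)^\perp$; since $\rho(u(\Sigma)) = u(\Sigma)$, the involution $d\rho_p$ preserves the $2$-plane $T_p u(\Sigma)$, and its $\pm 1$ eigenspace decomposition of that plane must pair the tangent-to-$P$ direction (eigenvalue $+1$) with an eigenvalue $-1$ direction, which necessarily points along $N_pP$. This shows $T_p u(\Sigma)$ is spanned by a vector in $T_pP$ and a vector in $N_pP$, i.e.\ $u(\Sigma)$ meets $P$ orthogonally.

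Having secured orthogonality, the third step is purely formal: $\Sigma_+$ is a holomorphic curve (being an open subset of the holomorphic curve $u(\Sigma)$, with $J$-invariant tangent spaces) sitting in the geodesic ball $\overline{H_+}$ with boundary on $\partial \overline{H_+} = P$ and meeting $P$ orthogonally, so Theorem~\ref{thm:Classical} applies and yields that $\Sigma_+$ is totally geodesic. Finally I would conclude globally: since $u(\Sigma) = \rho(\Sigma_+) \cup \Sigma_+$ and reflection is an isometry, $\Sigma_-$ is also totally geodesic, and the two totally geodesic pieces meet smoothly along $u(\Sigma)\cap P$, so $u(\Sigma)$ is a connected totally geodesic surface in $\mathbb{S}^6$. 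A connected complete totally geodesic holomorphic curve through a geodesic is a great $2$-sphere; since $\Sigma$ is closed and connected, $u(\Sigma)$ is a totally geodesic $2$-sphere.

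The step I expect to be the main obstacle is the orthogonality verification in the second step, because it requires care about what "reflection-invariant" means at the level of tangent planes and because the transversality hypothesis must be used precisely to exclude the degenerate possibility that $T_p u(\Sigma)$ lies inside $T_pP$ (in which case the intersection would not be transverse). The eigenspace argument above is the natural way to handle this, but I would want to confirm that transversality of the intersection of a $2$-plane with a $5$-plane in $T_p\mathbb{S}^6$ indeed forces the configuration claimed, rather than, say, $T_p u(\Sigma) \subset T_pP$; that exclusion is exactly what transversality provides.
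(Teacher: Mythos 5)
Your proposal is correct and follows essentially the same route as the paper, which (in a two-sentence remark after Theorem~\ref{thm:Classical}) notes that transversality and reflection-invariance make $u(\Sigma) \cap \{x_7 \geq 0\}$ a free-boundary holomorphic curve in the closed hemisphere, a geodesic ball, so Theorem~\ref{thm:Classical} applies. Your eigenspace argument for $d\rho_p$ --- using transversality to force $T_p u(\Sigma) = \bigl(T_p u(\Sigma) \cap T_pP\bigr) \oplus \bigl(T_p u(\Sigma) \cap N_pP\bigr)$ with each summand a line --- is precisely the justification of the orthogonality claim that the paper leaves implicit, and it is carried out correctly.
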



\begin{cor} \label{thm:ReflectAssoc} Let $f \colon N^3 \to \mathbb{R}^7$ be an associative cone whose link in $\mathbb{S}^6$ is a closed connected surface, and let $P \subset \mathbb{R}^7$ be a $6$-plane.  If $f(N)$ is invariant under reflection in $P$, and if the intersection of $f(N)$ with $P$ is transverse, then $f(N)$ is a $3$-plane.
\end{cor}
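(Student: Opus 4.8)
The plan is to reduce the statement to Corollary~\ref{thm:ReflectHolo} by passing from the cone $f(N) \subset \mathbb{R}^7$ to its link in $\mathbb{S}^6$. Write $\Sigma := f(N) \cap \mathbb{S}^6$ for the link. Since $f(N)$ is a cone, $f(N) = \text{C}(\Sigma)$, and since $f(N)$ is associative, the cone correspondence recalled in the introduction shows that $\Sigma$ is a holomorphic curve in $\mathbb{S}^6$; by hypothesis it is closed and connected. It then remains only to check that the reflection and transversality hypotheses on $f(N)$ descend to the corresponding hypotheses on $\Sigma$, after which Corollary~\ref{thm:ReflectHolo} applies and yields that $\Sigma$ is a totally geodesic $2$-sphere, i.e.\ $\Sigma = V \cap \mathbb{S}^6$ for a linear $3$-plane $V \subset \mathbb{R}^7$; consequently $f(N) = \text{C}(\Sigma) = V$ is a $3$-plane, as desired.

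First I would observe that $P$ passes through the origin, so that reflection in $P$ is a linear orthogonal map. Indeed, reflection in $P$ carries the cone $f(N)$, whose vertex is the origin, to a cone whose vertex is the image of the origin; as these cones coincide and a cone that is not a linear subspace has a unique vertex, either $f(N)$ is already a $3$-plane (and there is nothing to prove) or the origin lies on $P$. Thus $R_P \in O(7)$ is the linear reflection across $P$; it preserves $\mathbb{S}^6$ and restricts there to the geodesic reflection across the totally geodesic $5$-sphere $\tilde{P} := P \cap \mathbb{S}^6$. Because $R_P$ preserves both $f(N)$ and $\mathbb{S}^6$, it preserves $\Sigma = f(N) \cap \mathbb{S}^6$, so $\Sigma$ is invariant under reflection in $\tilde{P}$.

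Next I would transfer the transversality condition. Since $P$ is linear and $f(N)$ is a cone, one has $f(N) \cap P = \text{C}(\Sigma \cap \tilde{P})$, and at any $p \in \Sigma \cap \tilde{P}$ the tangent spaces split as $T_p f(N) = \mathbb{R}p \oplus T_p\Sigma$ and $P = \mathbb{R}p \oplus T_p\tilde{P}$, with $T_p\Sigma, T_p\tilde{P} \subset T_p\mathbb{S}^6 = p^{\perp}$. Hence $T_p f(N) + P = \mathbb{R}^7$ if and only if $T_p\Sigma + T_p\tilde{P} = T_p\mathbb{S}^6$; and because the tangent plane of a cone is constant along each ray, transversality of $f(N)$ with $P$ throughout $\mathbb{R}^7$ is equivalent to transversality of $\Sigma$ with $\tilde{P}$ in $\mathbb{S}^6$. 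With both hypotheses now verified for $\Sigma$, Corollary~\ref{thm:ReflectHolo} completes the argument.

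I expect the main point requiring care to be precisely this equivalence of the two transversality conditions: one must confirm that the ambient condition imposed on the $3$-fold $f(N)$ in $\mathbb{R}^7$ is exactly what is needed to feed into Corollary~\ref{thm:ReflectHolo}, which demands transversality of the link in $\mathbb{S}^6$. The cone structure reduces this to the linear-algebra splitting above at unit-length points, but one must also note that transversality is independent of the scaling parameter along each ray and is only ever tested away from the vertex, where $f(N)$ need not be smooth.
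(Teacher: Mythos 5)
Your proof is correct and takes essentially the same route the paper intends (and leaves implicit): Corollary \ref{thm:ReflectAssoc} is deduced from Corollary \ref{thm:ReflectHolo} by passing to the link $\Sigma = f(N) \cap \mathbb{S}^6$, using the associative-cone/holomorphic-curve correspondence, and checking that reflection-invariance and transversality descend from the cone to the link. Your additional care about whether $P$ passes through the origin and about transversality being constant along rays goes slightly beyond what the paper records, but the substance of the argument is identical.
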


\indent We now turn to the Lagrangian free-boundary problem.  For holomorphic curves in symplectic manifolds, this is a well-studied boundary condition with a wealth of applications \cite{gromov1985pseudo}, \cite{floer1988morse}, \cite{oh1993floer}.  While the $6$-sphere is not symplectic, the pair $(\langle \cdot, \cdot \rangle, J)$ consisting of the round metric and standard almost-complex structure is, in fact, the prototype of a ``strictly nearly-K\"{a}hler" structure. \\ 
\indent In general, recall that an almost-Hermitian $6$-manifold $(M^6, \langle \cdot, \cdot \rangle, J, \omega)$, consisting of a metric $\langle \cdot, \cdot \rangle$, compatible almost-complex structure $J$, and $2$-form $\omega(X,Y) := \langle JX, Y \rangle$ is \textit{nearly-K\"{a}hler} if
$$(\nabla_XJ)(Y) = -(\nabla_YJ)(X), \ \ \forall X,Y \in \Gamma(TM),$$
where $\nabla$ is the Levi-Civita connection.  By definition, a K\"{a}hler $6$-manifold ($\nabla J = 0$) is nearly-K\"{a}hler.  A nearly-K\"{a}hler $6$-manifold is \textit{strict} if it is not K\"{a}hler.  Strict nearly-K\"{a}hler $6$-manifolds are of great importance to $\text{G}_2$-geometry as models of $\text{G}_2$-holonomy cones \cite{bar1993real}. \\
\indent In $\S$4, we study compact holomorphic curves $u \colon \Sigma \to M^6$ with boundary $u(\partial \Sigma) \subset L$ on a fixed Lagrangian $3$-fold $L \subset M$ in a nearly-K\"{a}hler $6$-manifold $M$.  For normal variations $\eta \in \Gamma(N\Sigma)$ with $\eta$ tangent to $L$ along $\partial \Sigma$, we prove the second variation of area formula:
\begin{equation}
(\delta^2A)(\eta) = \int_\Sigma \frac{1}{2} \Vert \mathscr{D}\eta \Vert^2 + \frac{1}{3} d\omega(e, J\eta, \mathscr{D}_{e}\eta)  - 2\lambda^2 \Vert \eta \Vert^2. \label{eq:SecondVarFirst}
\end{equation}
Here, $\mathscr{D}$ is the $J$-antilinear part of the normal connection, $e \in \Gamma(T\Sigma)$ is any (local) unit vector field, and $\lambda \geq 0$ is a constant (the \textit{type} of $M$) having $\lambda > 0$ if and only if $M^6$ is strict.  A remarkable feature of this formula is that it contains \textit{no boundary integral}, a consequence of the trinity of conditions ``$M$ nearly-K\"{a}hler," ``$u$ holomorphic," and ``$L$ Lagrangian."  Using (\ref{eq:SecondVarFirst}), together with a suitable index formula, we deduce the following topological lower bound on the Morse index:

\begin{thm} \label{thm:LagIndexBound} Let $u \colon \Sigma \to M^6$ be a holomorphic curve in a strictly nearly-K\"{a}hler $6$-manifold, where $\Sigma$ is compact orientable surface with boundary, such that $u(\partial \Sigma) \subset L$ for some Lagrangian submanifold $L \subset M$.  Then the Morse index of $u$ for the area functional satisfies
\begin{equation}
\mathrm{Ind}(u) \geq \mu(u^*(TM), TL), \label{eq:IndexBound}
\end{equation}
where $\mu(u^*(TM), TL) \in \mathbb{Z}$ is the boundary Maslov index of the bundle pair $(u^*(TM), TL) \to (\Sigma, \partial \Sigma)$.
\end{thm}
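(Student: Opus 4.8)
The plan is to exhibit an explicit subspace of admissible normal variations on which the second variation formula (\ref{eq:SecondVarFirst}) is negative definite, and then to bound its dimension from below by a Fredholm index that turns out to equal the boundary Maslov index. Since $u$ is a holomorphic curve in $M^6$, the normal bundle $N\Sigma$ is $J$-invariant and hence a complex vector bundle of rank $2$, and $\mathscr{D}$, being the $J$-antilinear part of the normal connection, is a real-linear Cauchy--Riemann operator on $N\Sigma$. Setting $\Lambda := TL \cap N\Sigma$ along $\partial \Sigma$, I would consider
\[
\mathcal{H} := \{\eta \in \Gamma(N\Sigma) : \mathscr{D}\eta = 0, \ \eta|_{\partial \Sigma} \in \Lambda\},
\]
the kernel of $\mathscr{D}$ subject to the boundary condition $\Lambda$. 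Every $\eta \in \mathcal{H}$ is tangent to $L$ along $\partial \Sigma$, hence is an admissible variation, so it suffices to show that $\delta^2 A < 0$ on $\mathcal{H} \setminus \{0\}$, which yields $\mathrm{Ind}(u) \geq \dim_{\mathbb{R}} \mathcal{H}$.

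The negative-definiteness is immediate from (\ref{eq:SecondVarFirst}). If $\mathscr{D}\eta = 0$ then $\mathscr{D}_e \eta = 0$ for every $e$, so both the term $\tfrac{1}{2}\Vert \mathscr{D}\eta \Vert^2$ and the term $\tfrac{1}{3} d\omega(e, J\eta, \mathscr{D}_e \eta)$ vanish, leaving $(\delta^2 A)(\eta) = -2\lambda^2 \int_\Sigma \Vert \eta \Vert^2$. Because $M$ is \emph{strictly} nearly-K\"ahler we have $\lambda > 0$, so $(\delta^2 A)(\eta) < 0$ for $\eta \not\equiv 0$. Thus $\delta^2 A$ is negative definite on $\mathcal{H}$ and $\mathrm{Ind}(u) \geq \dim_{\mathbb{R}} \mathcal{H}$.

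Next I would verify that $\Lambda$ is a genuine totally real (in fact Lagrangian) subbundle of $N\Sigma$ and record the compatible splitting of the boundary data. At $p \in \partial \Sigma$, let $\tau$ be a unit tangent to $\partial \Sigma$; then $\tau \in T_pL \cap T_p\Sigma$, while $J\tau \in T_p\Sigma$ is orthogonal to $T_pL$ because $L$ is Lagrangian, forcing $T_pL \cap T_p\Sigma = \mathbb{R}\tau = T_p(\partial \Sigma)$. Writing $T_pL = \mathbb{R}\tau \oplus V$ with $V \perp \tau$, one checks $V \perp T_p\Sigma$, so $V = T_pL \cap N_p\Sigma = \Lambda_p$ is a real $2$-plane; since $V \subset T_pL$ with $L$ Lagrangian, $V \cap JV = 0$ and $\omega|_V = 0$, so $\Lambda$ is Lagrangian in the rank-$2$ symplectic bundle $N\Sigma$. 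This simultaneously gives the bundle-compatible splitting $TL = T(\partial \Sigma) \oplus \Lambda$ along $\partial \Sigma$, matching the complex splitting $u^*(TM) = T\Sigma \oplus N\Sigma$.

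Finally I would invoke Riemann--Roch for Cauchy--Riemann operators with totally real boundary conditions, $\mathrm{ind}_{\mathbb{R}}(\mathscr{D}, \Lambda) = 2\chi(\Sigma) + \mu(N\Sigma, \Lambda)$. Additivity of the Fredholm index and of the Maslov index across the splitting above, together with the classical computation $\mu(T\Sigma, T(\partial \Sigma)) = 2\chi(\Sigma)$ (e.g.\ by doubling $\Sigma$, so that $T\Sigma$ with the tangent boundary condition doubles to $T\widehat{\Sigma}$), gives $\mu(u^*(TM), TL) = \mu(T\Sigma, T(\partial \Sigma)) + \mu(N\Sigma, \Lambda)$ and hence $\mathrm{ind}_{\mathbb{R}}(\mathscr{D}, \Lambda) = \mu(u^*(TM), TL)$. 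Since $\dim_{\mathbb{R}}\mathcal{H} = \dim_{\mathbb{R}}\ker(\mathscr{D},\Lambda) \geq \mathrm{ind}_{\mathbb{R}}(\mathscr{D}, \Lambda)$, we conclude $\mathrm{Ind}(u) \geq \mu(u^*(TM), TL)$, as desired. The main obstacle is analytic rather than geometric: one must confirm that $\mathscr{D}$ is a compatible Cauchy--Riemann operator so that the index theorem for totally real boundary problems applies, and then carry out the Maslov normalization carefully --- in particular pinning down the tangent-bundle contribution and the additivity across the splitting --- so that the two computations of $\mathrm{ind}_{\mathbb{R}}(\mathscr{D},\Lambda)$ and of $\mu(u^*(TM), TL)$ agree exactly. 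This closely parallels the arguments of Chen--Fraser.
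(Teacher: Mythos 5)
Your proposal is correct and follows essentially the same route as the paper: the kernel $\mathcal{H}$ of $\mathscr{D}$ with totally real boundary condition $F = TL \cap N\Sigma$, negative definiteness of $\delta^2 A$ on $\mathcal{H}$ via the second variation formula with $\lambda > 0$, and the Riemann--Roch formula combined with Maslov-index additivity and $\mu(T\Sigma, T(\partial\Sigma)) = 2\chi(\Sigma)$ to identify $\mathrm{Ind}(\mathscr{D}_F)$ with $\mu(u^*(TM), TL)$. Your explicit verification that $F$ is a totally real (indeed Lagrangian) subbundle of $N\Sigma$ is a detail the paper asserts without proof, and it checks out.
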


\begin{rmk} In the edge case of $\partial \Sigma = \O$, the bound (\ref{eq:IndexBound}) gives no information, as that situation has $\mu(u^*(TM), \O) = 2 c_1(u^*(TM)) = 0$.  More generally, as the boundary Maslov index is \textit{integer}-valued, the bound (\ref{eq:IndexBound}) is vacuous whenever $\mu(u^*(TM), TL) \leq 0$.  It is not clear to the author how often this occurs.  In fact, while there are by now plenty of examples of holomorphic curves and Lagrangian submanifolds (see \cite[$\S$19]{chen00riemannian}, \cite[$\S$12]{joyce2007}, \cite{lotay2011ruled}, \cite{storm2020lagrangian} and references therein) in strict nearly-K\"{a}hler $6$-manifolds, there presently seem to be few explicit examples of pairs $(u(\Sigma), L)$ of holomorphic curves $u(\Sigma)$ with Lagrangian boundary $L$.  Constructing such pairs is an interesting direction for future research.
\end{rmk}

\begin{rmk} Recently, Pacini \cite{pacini2019maslov} discovered a beautiful Chern-Weil type integral formula for the boundary Maslov index.  This formula may yield more precise information on the bound (\ref{eq:IndexBound}) in various special cases (e.g., if $M^6 = \mathbb{S}^6$, say).  We leave this to the interested reader. 
\end{rmk}

\indent This work is organized as follows.  In $\S$2, we establish conventions and fix notation.  We also review the basics of nearly-K\"{a}hler manifolds and holomorphic curves for readers unfamiliar with these subjects.  In $\S$3, we study holomorphic curves in geodesic balls of $\mathbb{S}^6$.  We prove Theorem \ref{thm:Classical} by a short Hopf differential argument analogous to that of Fraser--Schoen \cite{fraser2015uniqueness}. \\
\indent In $\S$4 we study holomorphic curves with Lagrangian boundary.  In $\S$4.1, we derive the second variation formula (\ref{eq:SecondVarFirst}), and in $\S$4.3 we deduce Theorem \ref{thm:LagIndexBound} by appealing to a suitable generalization of the Riemann-Roch formula.  Our arguments follow those of Chen--Fraser \cite{chen2010holomorphic} who study holomorphic curves with Lagrangian boundary in (K\"{a}hler) complex projective spaces.  Note that sections $3$ and $4$ can be read independently of one another, although both rely on $\S$2.  \\

\noindent \textbf{Acknowledgements:} I thank Benjamin Aslan, Jonny Evans, and Chung-Jun Tsai for helpful conversations related to this work.  I also thank Gavin Ball, Da Rong Cheng, Spiro Karigainnis, Wei-Bo Su, and Albert Wood for their interest and encouragement.  This work was completed during the author's postdoctoral fellowship at the National Center for Theoretical Sciences (NCTS) at National Taiwan University; I thank the Center for their support.


\section{Preliminaries}

\subsection{Nearly-K\"{a}hler $2n$-Manifolds}

\indent \indent Let $M^{2n}$ be an \textit{almost-Hermitian} $2n$-manifold of real dimension $2n \geq 6$, meaning that $M^{2n}$ is equipped with a triple $(\langle \cdot, \cdot \rangle, J, \omega)$ consisting of a Riemannian metric $\langle \cdot, \cdot \rangle$, an orthogonal almost-complex structure $J$, and a non-degenerate $2$-form $\omega \in \Omega^2(M)$ given by $\omega(X,Y) = \langle JX, Y \rangle$.  We say that $M^{2n}$ is \textit{K\"{a}hler} if any of the following equivalent conditions are satisfied:
\begin{align*}
\nabla J = 0 \iff \nabla \omega & = 0 \iff J \text{ integrable and }\omega \text{ closed},
\end{align*}
where $\nabla$ is the Levi-Civita connection of $\langle \cdot, \cdot \rangle$.  The obstruction to $M^{2n}$ being K\"{a}hler is measured by the \textit{intrinsic torsion} tensor $P = \nabla J \in \Gamma(T^*M \otimes T^*M \otimes TM)$ given by
$$P(X,Y) = (\nabla_XJ)(Y).$$
Note that $P$ satisfies the symmetries
\begin{align}
P(X,JY) & = -J P(X,Y) & \langle P(X,Y), Z \rangle & = -\langle P(X,Z), Y \rangle.
\label{eq:TorsSym}
\end{align}
Note also that $P = \nabla J$ and $\nabla \omega$ carry essentially equivalent information, in view of the fact that
$$(\nabla_X\omega)(Y,Z) = \langle (\nabla_XJ)(Y), Z \rangle = \langle P(X,Y), Z \rangle.$$
\indent Fix $m \in M$, abbreviate $T = T_mM$, and note that the standard $\text{U}(n)$-representation on $T \simeq \mathbb{R}^{2n}$ induces a $\text{U}(n)$-representation on $T^* \otimes T^* \otimes T$, and hence on the $\text{U}(n)$-invariant subspace $E \subset T^* \otimes T^* \otimes T$ consisting of tensors with the symmetries (\ref{eq:TorsSym}).  In a now classical paper \cite{MR581924}, Gray and Hervella showed (for $n \geq 3$) that $E$ decomposes into four irreducible $\text{U}(n)$-submodules
$$E \cong V_1 \oplus V_2 \oplus V_3 \oplus V_4$$
of real dimensions
\begin{align*}
\frac{1}{3}n(n-1)(n-2), \ \ \ \ \ \frac{2}{3}n(n-1)(n+1), \ \ \ \ \ n(n+1)(n-2), \ \ \ \ \ 2n
\end{align*}
respectively.  

Letting $\mathcal{V}_1, \ldots, \mathcal{V}_4 \subset T^*M \otimes T^*M \otimes TM$ denote the vector subbundles over $M$ corresponding to the subspaces $V_1, \ldots, V_4 \subset T^* \otimes T^* \otimes T$, respectively, Gray and Hervella further showed that:
\begin{align*}
\nabla J \in \Gamma(\mathcal{V}_1) & \iff (\nabla_XJ)(Y) = -(\nabla_YJ)(X),  \ \ \forall X,Y \in TM \\
\nabla J \in \Gamma(\mathcal{V}_2) & \iff d\omega = 0 \\
\nabla J \in \Gamma(\mathcal{V}_3 \oplus \mathcal{V}_4) & \iff J \text{ integrable}.
\end{align*}
We say that $M$ is \textit{nearly-K\"{a}hler} if $\nabla J \in \Gamma(\mathcal{V}_1)$, i.e.:
$$P(X,Y) = -P(Y,X), \ \ \forall X,Y \in TM.$$
An equivalent condition \cite{MR581924} is that $\nabla \omega = \frac{1}{3}d\omega$.   In particular, every K\"{a}hler manifold is nearly-K\"{a}hler.  More precisely, the K\"{a}hler manifolds are exactly those nearly-K\"{a}hler manifolds with $J$ integrable (or, equivalently, with $d\omega = 0$). \\


\indent A nearly-K\"{a}hler manifold is said to be \textit{strict} if $\nabla_XJ \neq 0$ for all non-zero $X \in TM$.  In particular, every strict nearly-K\"{a}hler manifold is not K\"{a}hler.  In real dimension $6$, Gray showed \cite[Theorem 5.2]{gray1976structure} that the converse holds: If $M^6$ is nearly-K\"{a}hler and not K\"{a}hler, then there exists a constant $\lambda > 0$, called the \textit{type}, for which 
\begin{equation}
\Vert (\nabla_XJ)(Y) \Vert^2 = \lambda^2 \left[ \Vert X \Vert^2 \Vert Y \Vert^2 - \langle X,Y \rangle^2 - \langle JX, Y \rangle^2 \right]
\label{eq:ConstantType}
\end{equation}
for all $X,Y \in TM$, and hence $M$ is strict.  Of course, if $M^6$ is K\"{a}hler, then equation (\ref{eq:ConstantType}) is still true with $\lambda = 0$. \\

\indent If $M^{2n}$ is a nearly-K\"{a}hler $2n$-manifold, its Riemann curvature tensor $R(X,Y)Z = [\nabla_X,\nabla_Y]Z - \nabla_{[X,Y]}Z$ enjoys several symmetries with respect to the almost-complex structure $J$.  A convenient list of identities is given in \cite[$\S$2]{gray1976structure}, though we caution that Gray uses the opposite sign convention for $R$.  In this work, we will only need the identity
\begin{equation*}
R(X,Y,Z,X) + R(X,JY,JZ,X) + R(X, JX, Y, JZ) = 2 \langle P(X,Y), P(X,Z) \rangle
\end{equation*}
given in \cite[(2.4)]{gray1976structure}.  In particular, setting $Y = Z$ yields the relation
\begin{equation}
R(X,Y,Y,X) + R(X,JY,JY,X) + R(X,JX,Y,JY) = 2\Vert P(X,Y) \Vert^2.
\label{eq:KeyCurvId}
\end{equation}

%
%


\subsection{Nearly-K\"{a}hler $6$-Manifolds and $\text{G}_2$ Geometry}


\indent \indent Among nearly-K\"{a}hler manifolds, the strict nearly-K\"{a}hler 6-manifolds $M^6$ are particularly special.  In that case, the $3$-form $\frac{1}{3\lambda}\,d\omega$ gives rise to a complex volume form on $M^6$, which in turn leads to a relationship with $\text{G}_2$-holonomy cones.  The purpose of this section is to explain this point.

\subsubsection{Elements of $\text{G}_2$ Geometry}

\indent \indent Let $X^7$ be a smooth $7$-manifold.  A \textit{$\mathrm{G}_2$-structure} on $X^7$ is a $3$-form $\phi \in \Omega^3(X)$ such that at each $x \in X$, the symmetric bilinear form $B_\phi \colon \text{Sym}^2(T_xX) \to \Lambda^7(T_x^*X)$ given by
$$\textstyle B_\phi(v,w) := \frac{1}{6}\,(v\,\lrcorner\,\phi) \wedge (w\,\lrcorner\,\phi) \wedge \phi$$
is definite.  It is well-known that $X$ admits a $\text{G}_2$-structure if and only if $X$ is orientable and spin.  Moreover, a $\text{G}_2$-structure induces a Riemannian metric $g_\phi$ and orientation $\text{vol}_\phi$ according to
\begin{align*}
g_\phi(v,w)\,\text{vol}_\phi & = \textstyle \frac{1}{6}\,(v\,\lrcorner\,\phi) \wedge (w\,\lrcorner\,\phi) \wedge \phi \\
\text{vol}_\phi & = \phi \wedge \ast\phi.
\end{align*}
A $\text{G}_2$-structure $\phi$ is called \textit{torsion-free} if it is closed and co-closed --- i.e., if $d\phi = 0$ and $d(\ast \phi) = 0$. The relationship between torsion-free $\text{G}_2$-structures and Riemannian metrics with $\text{G}_2$ holonomy is given by the following theorem of Fern\'{a}ndez and Gray:

\begin{thm}[\cite{fernandez1982riemannian}] Let $X^7$ be orientable and spin.  If $\phi \in \Omega^3(X)$ is a torsion-free $\mathrm{G}_2$-structure on $X$, then its induced metric $g_\phi$ has $\mathrm{Hol}(g_\phi) \leq \mathrm{G}_2$.  Conversely, if $g$ is a Riemannian metric on $X$ with $\mathrm{Hol}(g) \leq \mathrm{G}_2$, then $g = g_\phi$ for some torsion-free ${G}_2$-structure $\phi$ on $X$.
\end{thm}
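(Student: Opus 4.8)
The plan is to reduce both directions to the single assertion that the torsion-free condition ($d\phi = 0$ and $d(\ast\phi)=0$) is equivalent to $\nabla\phi = 0$, where $\nabla$ is the Levi-Civita connection of $g_\phi$; the holonomy statement then follows from the holonomy principle. Since $\mathrm{G}_2 = \mathrm{Stab}_{\mathrm{GL}(7,\mathbb{R})}(\phi)$ is contained in $\mathrm{SO}(7)$ (it preserves $g_\phi$ and $\mathrm{vol}_\phi$), a $\mathrm{G}_2$-structure is exactly a reduction of the orthonormal frame bundle to $\mathrm{G}_2$, and parallel transport of $g_\phi$ preserves $\phi$ precisely when $\nabla\phi = 0$; thus the holonomy principle gives $\mathrm{Hol}(g_\phi)\leq\mathrm{G}_2 \iff \nabla\phi = 0$. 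The converse direction of the theorem is then immediate: if $\mathrm{Hol}(g)\leq\mathrm{G}_2$, the reduction theorem produces a $g$-parallel $\mathrm{G}_2$-invariant $3$-form $\phi$ with $g_\phi = g$, and because $d$ is the antisymmetrization of $\nabla$ while $\ast$ commutes with $\nabla$, the identity $\nabla\phi = 0$ gives $\nabla(\ast\phi) = \ast\nabla\phi = 0$ and hence $d\phi = 0 = d(\ast\phi)$. The content therefore lies entirely in the forward implication, that $d\phi = 0$ and $d(\ast\phi)=0$ force $\nabla\phi = 0$.

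For this I would follow the representation-theoretic route of Fern\'andez--Gray. The first step is the structural observation that $\nabla_X\phi$ lies in the $7$-dimensional summand $\Lambda^3_7 \subset \Lambda^3 T^*X$: since $g_\phi$ is $\nabla$-parallel, $\nabla_X\phi = A_X\cdot\phi$ for some $A_X \in \mathfrak{so}(T_mX) \cong \Lambda^2 T^*_mX$ acting infinitesimally, and modulo $\mathfrak{g}_2 = \mathrm{stab}(\phi)$ one may take $A_X \in \Lambda^2_7$, so that $\nabla_X\phi \in \Lambda^2_7\cdot\phi = \Lambda^3_7$ (the action $\Lambda^2_7 \to \Lambda^3_7$ being an isomorphism by Schur). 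Consequently $\nabla\phi$ is encoded by a \emph{full torsion tensor} $T \in \Gamma(T^*X\otimes T^*X)$ via $\nabla_a\phi = T_a{}^{\,e}\,(e\lrcorner\ast\phi)$, and I would decompose $T$ under $\mathrm{G}_2$ as
\begin{equation*}
T^*X\otimes T^*X \cong \mathbb{R}\,\oplus\,S^2_0(T^*X)\,\oplus\,\Lambda^2_7\,\oplus\,\Lambda^2_{14},
\end{equation*}
of real dimensions $1,27,7,14$ --- the four Fern\'andez--Gray torsion classes $\tau_0,\tau_3,\tau_1,\tau_2$ respectively.

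The crux is to express $d\phi$ and $d\psi$ (with $\psi=\ast\phi$) in terms of these classes. Using $d = \mathrm{alt}\circ\nabla$ together with the $\mathrm{G}_2$-decompositions of $\Lambda^4 T^*X$ and $\Lambda^5 T^*X$, I expect identities of the schematic form
\begin{align*}
d\phi &= \tau_0\,\psi + 3\,\tau_1\wedge\phi + \ast\tau_3, & d\psi &= 4\,\tau_1\wedge\psi + \tau_2\wedge\phi,
\end{align*}
in which $\tau_0$ occupies $\Lambda^4_1$, $\tau_3$ occupies $\Lambda^4_{27}$, $\tau_2$ occupies $\Lambda^5_{14}$, and $\tau_1$ appears (with compatible constants) in the $\Lambda^4_7$ part of $d\phi$ and the $\Lambda^5_7$ part of $d\psi$. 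Since every torsion class contributes to $(d\phi,d\psi)$, this linear map is injective, so $d\phi = 0 = d\psi$ forces $\tau_0=\tau_1=\tau_2=\tau_3 = 0$, i.e.\ $T=0$, i.e.\ $\nabla\phi = 0$, completing the forward direction. The main obstacle is exactly the verification of these two identities: establishing that $\nabla\phi$ is governed by the $2$-tensor $T$ (equivalently that $\nabla\phi$ lands in $\Lambda^3_7$) and pinning down the universal constants above. Both are pure $\mathrm{G}_2$-representation theory --- the decompositions of $\Lambda^k T^*X$ into irreducibles, Schur's lemma to reduce each coefficient to a scalar, and a single explicit frame computation to fix those scalars --- and the bookkeeping, while routine, is where errors most easily creep in.
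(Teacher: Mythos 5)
The paper never proves this statement: it is quoted verbatim as a classical theorem of Fern\'andez--Gray, with a citation to \cite{fernandez1982riemannian}, and is used purely as background for the discussion of $\mathrm{G}_2$-holonomy cones in $\S 2.2$. So there is no in-paper argument to compare yours against; what can be judged is whether your blind proposal is a viable proof, and it is --- indeed it is the standard modern rendering of the Fern\'andez--Gray argument (in the formulation popularized by Bryant and Karigiannis). Your reduction of both directions to the equivalence ``$d\phi=0=d(\ast\phi)$ $\iff$ $\nabla\phi=0$'' via the holonomy principle is exactly right, and the converse direction is handled correctly (parallel $3$-form from the reduction theorem, $d=\mathrm{alt}\circ\nabla$, $\ast$ parallel). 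Your structural claims for the forward direction are also accurate: $\nabla\phi$ takes values in $T^*X\otimes\Lambda^3_7$ because $\Lambda^2_{14}=\mathfrak{g}_2$ annihilates $\phi$ while $\Lambda^2_7\cdot\phi=\Lambda^3_7$, so the torsion is a full $2$-tensor $T$ with the $49=1+27+7+14$ decomposition, and the two structure equations you write, $d\phi=\tau_0\,\psi+3\,\tau_1\wedge\phi+\ast\tau_3$ and $d\psi=4\,\tau_1\wedge\psi+\tau_2\wedge\phi$, are correct as stated, constants included. The one point where I would sharpen your self-assessment: the injectivity of $T\mapsto(d\phi,d\psi)$ is \emph{not} a consequence of Schur's lemma alone. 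Schur only says each isotypic block acts by a scalar, and those scalars can vanish --- indeed $d\phi$ alone fails to see $\tau_2$, and $d\psi$ alone fails to see $\tau_0$ and $\tau_3$, so it is essential (and not automatic) that every irreducible component of $T$ survives into at least one of the two exterior derivatives with a nonzero coefficient. The frame computation you defer as ``routine bookkeeping'' is therefore the actual mathematical content of the theorem, and it is precisely what occupies the original Fern\'andez--Gray paper; a complete write-up would have to carry it out rather than gesture at it.
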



\indent Let $X$ have a $\text{G}_2$-structure $\phi \in \Omega^3(M)$.  A $3$-dimensional submanifold $N^3 \subset X^7$ is called \textit{associative} if
$$\left.\phi\right|_N = \text{vol}_N.$$
If $d\phi = 0$, then $\phi$ is a calibration and its calibrated submanifolds are precisely the associative $3$-folds.  Thus, in this case, associatives are homologically volume-minimizing.  Associative $3$-folds are fundamental objects in $\text{G}_2$-geometry, and are in many ways analogous to holomorphic curves in symplectic geometry.



\subsubsection{Strict Nearly-K\"{a}hler $6$-Manifolds and $\text{G}_2$ Holonomy Cones}

\indent \indent On an almost-Hermitian manifold $(M^{2n}, \langle \cdot, \cdot \rangle, J, \omega)$, a \textit{complex volume form} is an $(n,0)$-form $\Upsilon$ satisfying the normalization
$$\textstyle (-1)^{n(n-1)/2}\left( \frac{i}{2}\right)^n \Upsilon \wedge \overline{\Upsilon} = \text{vol}_M.$$
If $(M^6, J, \omega)$ is a strict nearly-K\"{a}hler $6$-manifold, then it is well-known \cite[$\S$4.3]{reyes1993some} that each of the $3$-forms (for a constant $\theta \in [0,2\pi)$) 
$$\Upsilon_\theta(X,Y,Z) := \frac{e^{i\theta}}{3\lambda} \left( d\omega(X,Y,Z) - i\, d\omega(X,Y,JZ) \right)$$
is a complex volume on $M$.  Moreover, the pair $(\omega, \Upsilon_\theta)$ satisfies the differential equations
\begin{align}
d\omega & = 3\lambda \left( \cos(\theta)\,\text{Re}(\Upsilon_\theta) + \sin(\theta)\,\text{Im}(\Upsilon_\theta) \right) \notag \\
d\,\text{Re}(\Upsilon_\theta) & = 2\lambda\sin(\theta)\,\omega \wedge \omega \label{eq:NKEqns} \\
d\,\text{Im}(\Upsilon_\theta) & = -2\lambda\cos(\theta)\,\omega \wedge \omega. \notag
\end{align}

\begin{rmk} In $\S$2.1, we defined ``nearly-K\"{a}hler" as a particular class of almost-Hermitian structures $(\langle \cdot, \cdot\rangle, J, \omega)$.  This is the original definition used by Gray \cite{gray1965minimal}.  However, in real dimension $6$, it has recently become common for authors to use the term ``nearly-K\"{a}hler $6$-manifold" to mean a ``strict nearly-K\"{a}hler $6$-manifold together with a fixed $\Upsilon_\theta$ (usually $\Upsilon_0$ or $\Upsilon_{\pi/2}$) and scaled to have $\lambda = 1$."
\end{rmk}




\indent For a Riemannian manifold $(M,g)$, recall that its \textit{metric cone} is the Riemannian manifold
$$\text{C}(M) := (\mathbb{R}^+ \times M, dr^2 + r^2g).$$
We will routinely identify $M$ with the subset $\{1\} \times M \subset \text{C}(M)$ and call $M$ the \textit{link} of $\text{C}(M)$.  The relationship between strict nearly-K\"{a}hler $6$-manifolds and $\text{G}_2$ geometry is given by: 

\begin{thm}[\cite{bar1993real}\label{thm:Bar}] If $M^6$ is a strict nearly-K\"{a}hler $6$-manifold, scaled to have $\lambda = 1$, then $\mathrm{C}(M)$ admits a torsion-free $\mathrm{G}_2$-structure.  Conversely, if $\mathrm{C}(M)$ is a $7$-dimensional metric cone with a torsion-free $\mathrm{G}_2$-structure, then its link $M^6$ admits a strict nearly-K\"{a}hler structure with $\lambda = 1$.
\end{thm}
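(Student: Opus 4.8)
The plan is to prove both directions by the explicit differential-forms correspondence between an $SU(3)$-structure on the link and a $\mathrm{G}_2$-structure on the cone, using the structure equations \eqref{eq:NKEqns} rather than the spinorial formulation of \cite{bar1993real}. Throughout I normalize by taking $\theta = 0$ and $\lambda = 1$, and I abbreviate $\psi_+ = \text{Re}(\Upsilon_0)$ and $\psi_- = \text{Im}(\Upsilon_0)$; with these choices \eqref{eq:NKEqns} reads $d\omega = 3\psi_+$, $d\psi_+ = 0$, and $d\psi_- = -2\,\omega\wedge\omega$.

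For the forward direction, on $\mathrm{C}(M) = (\mathbb{R}^+\times M,\, dr^2 + r^2 g)$ I take as candidate the $3$-form
\[
\phi = r^2\,dr\wedge\omega + r^3\,\psi_+,
\]
where $\omega$ and $\psi_+$ are pulled back from $M$. The first step is a pointwise verification, via the standard inclusion $SU(3)\subset\mathrm{G}_2$, that $\phi$ is a genuine $\mathrm{G}_2$-structure whose induced metric $g_\phi$ is the cone metric and whose Hodge dual is $\ast\phi = \tfrac{1}{2}r^4\,\omega\wedge\omega - r^3\,dr\wedge\psi_-$; this is linear algebra slice-by-slice and uses only that $(\omega,\psi_\pm)$ induce $g$ on $M$. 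The second step is the torsion-free check. Since $r^2\,dr$ and $r^3\,dr$ are closed and $\omega,\psi_\pm$ are $r$-independent, one computes $d\phi = r^3\,d_M\psi_+$ after the $dr$-terms cancel by $d\omega = 3\psi_+$, whence $d\psi_+ = 0$ gives $d\phi = 0$; similarly $d(\ast\phi) = 0$ follows from $\psi_+\wedge\omega = 0$ together with $d\psi_- = -2\,\omega\wedge\omega$. By the Fern\'{a}ndez--Gray theorem, $\mathrm{Hol}(g_\phi)\leq\mathrm{G}_2$, completing this direction.

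For the converse I must recover a strict nearly-K\"{a}hler structure on $M$ from a torsion-free $\mathrm{G}_2$-structure $\phi$ on the cone. The key is to show that $\phi$ is necessarily \emph{conical}: because the Euler field $r\partial_r$ is homothetic, $\mathcal{L}_{r\partial_r}(dr^2 + r^2 g) = 2(dr^2 + r^2 g)$, and because scaling $\phi$ pointwise by $c^3$ scales $g_\phi$ by $c^2$, the requirement $g_\phi = dr^2 + r^2 g$ forces $\mathcal{L}_{r\partial_r}\phi = 3\phi$. Combined with $d\phi = 0$, Cartan's formula gives $d(r\partial_r\,\lrcorner\,\phi) = 3\phi$, which pins down the $r$-dependence and exhibits $\phi = r^2\,dr\wedge\omega + r^3\,\psi_+$ for a pair $(\omega,\psi_+)$ on the link, with $\ast\phi$ supplying $\psi_-$. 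Reversing the two computations above, $d\phi = 0$ and $d(\ast\phi) = 0$ become exactly $d\omega = 3\psi_+$, $d\psi_+ = 0$, and $d\psi_- = -2\,\omega\wedge\omega$, i.e.\ the $\theta = 0$, $\lambda = 1$ case of \eqref{eq:NKEqns}; this identifies $(\omega,\Upsilon_0 = \psi_+ + i\psi_-)$ as a strict nearly-K\"{a}hler $SU(3)$-structure, strictness being automatic since $d\omega = 3\psi_+\neq 0$ forces $\nabla J\neq 0$. I expect the main obstacle to be this conical-reduction step: establishing rigorously that a parallel $\mathrm{G}_2$-form inducing a cone metric is genuinely scale-covariant (and not merely so up to a pointwise $\mathrm{G}_2$-gauge) requires a careful argument about parallel transport along radial geodesics, which is precisely the point where Bär's spinorial approach is cleanest.
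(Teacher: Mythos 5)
Your forward direction coincides with the paper's: same candidate $\phi = r^2\,dr\wedge\omega + r^3\,\mathrm{Re}(\Upsilon_0)$, same formula for $\ast\phi$, and the same use of (\ref{eq:NKEqns}). The paper packages the computation slightly more efficiently by exhibiting primitives, $\phi = d\bigl(\tfrac{r^3}{3}\,\omega\bigr)$ and $\ast\phi = d\bigl(-\tfrac{r^4}{4}\,\mathrm{Im}(\Upsilon_0)\bigr)$, so closedness and coclosedness are immediate, but this difference is cosmetic. (Your appeal to Fern\'andez--Gray at the end of this direction is superfluous: the assertion is only that a torsion-free $\mathrm{G}_2$-structure exists.)

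The converse, however, has a genuine gap, exactly where you flagged it, and the argument you offer for the key step is a non sequitur rather than merely incomplete. Homogeneity of the metric does not force homogeneity of the $3$-form: if $\delta_c(r,x) := (cr,x)$ denotes dilation, all your scaling argument yields is that $c^{-3}\,\delta_c^*\phi$ is \emph{another} torsion-free $\mathrm{G}_2$-structure inducing the same metric $g_\phi$; since the $\mathrm{G}_2$-forms compatible with a fixed metric and orientation at a point form an orbit $\mathrm{SO}(7)/\mathrm{G}_2 \cong \mathbb{RP}^7$, this does not give $c^{-3}\,\delta_c^*\phi = \phi$, i.e.\ $\mathcal{L}_{r\partial_r}\phi = 3\phi$. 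That is precisely the gauge loophole you concede, and without closing it the converse is unproven. The fix is short and does not require B\"ar's spinorial formalism. By Fern\'andez--Gray \cite{fernandez1982riemannian} (quoted in $\S$2.2.1), torsion-freeness gives $\overline{\nabla}\phi = 0$ for the Levi-Civita connection of the cone metric. On a metric cone the Euler field $E = r\partial_r$ satisfies $\overline{\nabla}_X E = X$ for every $X$, so for any $\overline{\nabla}$-parallel $p$-form $\alpha$,
\begin{equation*}
(\mathcal{L}_E\alpha)(Y_1,\ldots,Y_p) = (\overline{\nabla}_E\alpha)(Y_1,\ldots,Y_p) + \sum_{i=1}^p \alpha(Y_1,\ldots,\overline{\nabla}_{Y_i}E,\ldots,Y_p) = p\,\alpha(Y_1,\ldots,Y_p).
\end{equation*}
Hence $\mathcal{L}_E\phi = 3\phi$ and $\mathcal{L}_E(\ast\phi) = 4\ast\!\phi$, which (writing $\phi = dr\wedge\alpha_r + \beta_r$ and integrating the homogeneity in $r$) puts $\phi$ and $\ast\phi$ exactly in the conical form you want, with $\omega$, $\psi_\pm$ pulled back from the link; from there your translation of $d\phi = 0$ and $d(\ast\phi) = 0$ into the $\theta = 0$, $\lambda = 1$ case of (\ref{eq:NKEqns}) runs as you describe. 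Note that the paper's own sketch of the converse silently relies on the same homogeneity step (its formula $\omega := (\partial_r\,\lrcorner\,\phi)|_M$ determines $d\omega$ only once the $r$-dependence of $\phi$ is known), deferring the details to \cite{bar1993real}, \cite{morris2014nearly}, \cite{van2019lagrangian}.
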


\indent Since manifolds with holonomy contained in $\text{G}_2$ are Ricci-flat, and since the links of Ricci-flat cones are Einstein of positive scalar curvature, it follows that:

\begin{cor}[\cite{gray1976structure}] Every strict nearly-K\"{a}hler $6$-manifold is Einstein of positive scalar curvature.
\end{cor}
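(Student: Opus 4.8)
The plan is to deduce this as a corollary of Theorem~\ref{thm:Bar}, via the standard correspondence between the Ricci curvature of a metric cone and that of its link --- exactly as the sentence preceding the statement suggests. First I would reduce to the normalized case $\lambda = 1$. Let $(M^6, \langle\cdot,\cdot\rangle, J, \omega)$ be strict nearly-K\"{a}hler of type $\lambda > 0$. Rescaling the metric by a constant factor $\langle\cdot,\cdot\rangle \mapsto c^2\langle\cdot,\cdot\rangle$ keeps $J$ orthogonal, preserves the nearly-K\"{a}hler condition (the Levi-Civita connection and $J$ are unchanged), and sends $\lambda \mapsto \lambda/c$. Since this rescaling leaves $\mathrm{Ric}$ unchanged as a $(0,2)$-tensor and multiplies the scalar curvature by $c^{-2} > 0$, neither the Einstein property nor the sign of the scalar curvature is affected; so it suffices to treat the case $\lambda = 1$.

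Second, I would pass to the cone. By Theorem~\ref{thm:Bar}, the metric cone $\mathrm{C}(M) = (\mathbb{R}^+ \times M,\, dr^2 + r^2\langle\cdot,\cdot\rangle)$ admits a torsion-free $\mathrm{G}_2$-structure, so by the Fern\'{a}ndez--Gray theorem its induced metric has holonomy contained in $\mathrm{G}_2$. I would then invoke the standard fact that any metric with $\mathrm{Hol} \subseteq \mathrm{G}_2$ is Ricci-flat: the holonomy representation fixes a spinor, and the existence of a parallel spinor forces the Ricci tensor to vanish. Hence $\mathrm{C}(M)$ is Ricci-flat.

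Third, I would relate the Ricci curvature of the cone to that of its link. For a metric cone $dr^2 + r^2 g$ over an $n$-dimensional link $(M,g)$, a classical warped-product computation gives that the cone is Ricci-flat if and only if $\mathrm{Ric}_g = (n-1)\,g$. Applying this with $n = 6$ yields $\mathrm{Ric} = 5\,\langle\cdot,\cdot\rangle$, so $M^6$ is Einstein with positive Einstein constant and scalar curvature $30 > 0$, as claimed. (As a consistency check, the round $\mathbb{S}^6$ has $\mathrm{C}(\mathbb{S}^6) = \mathbb{R}^7 \setminus \{0\}$ flat, hence Ricci-flat, and indeed $\mathrm{Ric}_{\mathbb{S}^6} = 5g$.)

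Since every step is a direct appeal either to a cited theorem or to a classical identity, I do not expect a genuine obstacle here; the entire content sits in Theorem~\ref{thm:Bar}, and the only computational ingredient is the standard cone--link Ricci relation. As an alternative I could bypass the cone and argue intrinsically: tracing the curvature identity~(\ref{eq:KeyCurvId}) over an orthonormal basis adapted to $J$ and substituting the constant-type formula~(\ref{eq:ConstantType}) should recover $\mathrm{Ric} = 5\lambda^2\,\langle\cdot,\cdot\rangle$ directly. On that route I expect the bookkeeping of the mixed term $\sum_i R(X, JX, e_i, Je_i)$ --- rewriting it against $\omega$ via $Je_i = \sum_j \omega(e_i,e_j)\,e_j$ --- to be the one mildly delicate point.
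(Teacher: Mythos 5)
Your proposal is correct and follows exactly the paper's route: the paper deduces the corollary from Theorem~\ref{thm:Bar} by noting that metrics with holonomy in $\mathrm{G}_2$ are Ricci-flat and that links of Ricci-flat cones are Einstein with positive scalar curvature, which is precisely your second and third steps. Your explicit handling of the rescaling to $\lambda = 1$ and the cone--link relation $\mathrm{Ric}_g = (n-1)g$ merely fills in details the paper leaves implicit.
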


\indent Here is a sketch of Theorem \ref{thm:Bar}. If $(M^6, \langle \cdot, \cdot \rangle, J, \omega)$ is strictly nearly-K\"{a}hler with $\lambda = 1$, then define a $3$-form $\phi \in \Omega^3(\text{C}(M))$ and $4$-form $\psi \in \Omega^4(\text{C}(M))$ via
\begin{align*}
\phi & := r^2\,dr \wedge \omega + r^3\,\text{Re}(\Upsilon_0) & \psi & := \textstyle -r^3\,dr \wedge \text{Im}(\Upsilon_0) + \frac{1}{2}r^4\,\omega \wedge \omega.
\end{align*}
Linear algebra shows that $\phi$ is a $\text{G}_2$-structure on $\text{C}(M)$ and that $\psi = \ast \phi$.  Equations (\ref{eq:NKEqns}) for $(\omega, \Upsilon_0)$ imply that $\phi = d\!\left(\frac{r^3}{3}\omega\right)$ and $\psi = d\!\left(-\frac{r^4}{4}\,\text{Im}(\Upsilon_0)\right)$, so $\phi$ is torsion-free.

Conversely, if $\phi \in \Omega^3(\text{C}(M))$ is a torsion-free $\text{G}_2$-structure, then one can define a $2$-form $\omega \in \Omega^2(M)$ and a complex $3$-form $\Upsilon_0 \in \Omega^3(M; \mathbb{C})$ via
\begin{align*}
\omega & := \left.(\partial_r \,\lrcorner\,\phi)\right|_M & \Upsilon_0 & := \left.(\phi - i\,\partial_r \lrcorner \ast\!\phi)\right|_M\!.
\end{align*}
Linear algebra shows that $\omega$ is a non-degenerate $2$-form that is $\langle \cdot, \cdot \rangle$-compatible, so that defining $J$ via $\langle X, JY \rangle = \omega(X,Y)$ makes $(\langle \cdot, \cdot \rangle, J, \omega)$ an almost-Hermitian structure on $M$.  Moreover, $\Upsilon_0$ is a complex volume form with respect to $J$.  The torsion-free condition can then be used to calculate $d\omega = 3\,\text{Re}(\Upsilon_0)$, from which one can deduce that $(\langle \cdot, \cdot \rangle, J, \omega)$ is strictly nearly-K\"{a}hler with $\lambda = 1$.  This concludes the sketch.  For more details, see \cite[$\S$7]{bar1993real}, \cite{morris2014nearly}, \cite[$\S$2]{van2019lagrangian}.

\subsection{Holomorphic Curves in Nearly-K\"{a}hler $6$-Manifolds}

\indent \indent We now define our primary objects of interest.  In an almost-Hermitian manifold $(M^{2n}, \langle \cdot, \cdot \rangle, J, \omega)$, a \textit{holomorphic curve} is an immersion $u \colon \Sigma^2 \to M^{2n}$ such that each tangent space is $J$-invariant:
$$J(T_p\Sigma) = T_p\Sigma, \ \ \forall p \in \Sigma.$$
An equivalent condition is that
$$\left.\omega\right|_\Sigma = \text{vol}_\Sigma$$
where $\text{vol}_\Sigma$ is the volume form on $\Sigma$.  If $d\omega = 0$, then $\omega$ is a calibration, and hence holomorphic curves are homologically area-minimizing. 

\indent Now, if $M^6$ is a strict nearly-K\"{a}hler $6$-manifold, then $\omega$ is not closed, and hence not a calibration.  Nevertheless, the holomorphic curves in $M^6$ are rather special geometric objects, as the following well-known fact shows: 


\begin{prop} Let $M^6$ be a strict nearly-K\"{a}hler $6$-manifold.  Let $\Sigma^2 \subset M^6$ be an immersed oriented surface.  Then $\Sigma^2 \subset M^6$ is a holomorphic curve if and only if $\mathrm{C}(\Sigma) \subset \mathrm{C}(M)$ is an associative $3$-fold.
\end{prop}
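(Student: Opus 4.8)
The plan is to prove the equivalence by relating the pointwise condition ``$T_p\Sigma$ is $J$-invariant'' to the pointwise condition ``$\left.\phi\right|_{\mathrm{C}(\Sigma)} = \mathrm{vol}_{\mathrm{C}(\Sigma)}$,'' using the explicit formula $\phi = r^2\,dr \wedge \omega + r^3\,\mathrm{Re}(\Upsilon_0)$ from the sketch of Theorem \ref{thm:Bar}. Since both conditions are local and scale-invariant, it suffices to check them at a single point $p \in \Sigma$ sitting at radius $r = 1$ in the cone, and to show they agree. First I would set up a local frame: at a point $p$, choose a unit vector $e_1 \in T_p\Sigma$ and let $e_2$ be the unit vector completing an oriented orthonormal basis of $T_p\Sigma$, so that $\mathrm{vol}_\Sigma = e^1 \wedge e^2$ on $\Sigma$ (dual coframe). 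The tangent space to the cone $\mathrm{C}(\Sigma)$ at the point $(1,p)$ is then spanned by $\partial_r, e_1, e_2$, and the relevant test is whether $\phi(\partial_r, e_1, e_2) = 1$ while $\phi$ restricted to this $3$-plane is the volume form, which (since $\partial_r, e_1, e_2$ is orthonormal in the cone metric at $r=1$) reduces to checking $\phi(\partial_r, e_1, e_2) = 1$ together with the constraint that $\phi\big|_{\mathrm{C}(\Sigma)}$ is a calibration achieving equality, equivalently $\|\phi|_{\mathrm{C}(\Sigma)}\| \le 1$ always with equality here.

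The key computation is to evaluate $\phi(\partial_r, e_1, e_2)$ at $r = 1$. Because $\partial_r \,\lrcorner\, (r^3 \mathrm{Re}(\Upsilon_0)) = 0$ and $\partial_r \,\lrcorner\,(r^2 dr \wedge \omega) = r^2 \omega$, I get $\phi(\partial_r, e_1, e_2) = \omega(e_1, e_2) = \langle Je_1, e_2\rangle$ at $r = 1$. Now the holomorphic condition $J(T_p\Sigma) = T_p\Sigma$ with the orientation chosen so that $\{e_1, Je_1\}$ is positively oriented is precisely the statement $e_2 = Je_1$, which gives $\omega(e_1, e_2) = \langle Je_1, Je_1\rangle = 1$. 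Thus holomorphicity at $p$ forces $\phi(\partial_r, e_1, e_2) = 1$. For the converse, I would invoke the fact that $\phi$ is a $\mathrm{G}_2$-calibration (for $M$ scaled to $\lambda = 1$, $\phi$ is closed by Theorem \ref{thm:Bar}), so $\phi$ restricted to any oriented $3$-plane is at most the volume, and associative $3$-planes through the origin of a cone are exactly those on which equality holds; the equality $\phi(\partial_r, e_1, e_2) = 1$ combined with $|\omega(e_1,e_2)| \le 1$ (Cauchy--Schwarz / compatibility of $\omega$ with the metric, equality iff $e_2 = \pm Je_1$) then forces $T_p\Sigma$ to be $J$-invariant.

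I expect the main obstacle to be the converse direction, specifically making precise the claim that equality $\phi|_V = \mathrm{vol}_V$ on the cone plane $V = \mathrm{span}(\partial_r, e_1, e_2)$ forces $\omega(e_1, e_2) = 1$ and hence $e_2 = Je_1$, rather than merely bounding it. The clean way is to exploit the structure of $\phi$ on a metric cone: any associative plane $V$ through the cone vertex must contain $\partial_r$ if it is tangent to a cone over a curve --- more carefully, for a cone $\mathrm{C}(\Sigma)$ the tangent $3$-plane automatically contains the radial direction, so contracting with $\partial_r$ is legitimate. Then the associative (calibration-equality) condition on $V$ is equivalent to $(\partial_r \lrcorner \phi)|_{T_p\Sigma} = \mathrm{vol}_{T_p\Sigma}$, i.e.\ $\omega|_{T_p\Sigma} = \mathrm{vol}_\Sigma$, which by the equivalent characterization of holomorphic curves stated just before this proposition (``$\left.\omega\right|_\Sigma = \mathrm{vol}_\Sigma$ iff $\Sigma$ is holomorphic'') is exactly the holomorphic condition. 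This reduces the whole proposition to a short linear-algebra identity for the contraction $\partial_r \lrcorner \phi$ together with the already-established equivalence ``$\omega|_\Sigma = \mathrm{vol}_\Sigma \iff$ holomorphic,'' and I would present the argument in that order: first record $\partial_r \lrcorner \phi = r^2 \omega$, then note that the cone plane contains $\partial_r$ and has the product metric structure so that associativity reduces to $\omega|_{T_p\Sigma} = \mathrm{vol}_{T_p\Sigma}$, then cite the prior equivalence to conclude.
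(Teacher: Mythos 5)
Your proof is correct, but there is no proof in the paper to compare it against: the paper states this proposition as a ``well-known fact'' and gives no argument, passing immediately to its consequence (holomorphic curves are minimal). Your route is the standard one, and it is precisely the argument that the paper's own sketch of Theorem \ref{thm:Bar} makes available: on the cone, $\phi = r^2\,dr\wedge\omega + r^3\,\mathrm{Re}(\Upsilon_0)$, the tangent planes of $\mathrm{C}(\Sigma)$ always contain $\partial_r$, and both $\omega$ and $\mathrm{Re}(\Upsilon_0)$ are pulled back from $M$ and hence annihilate $\partial_r$, so that $\phi(\partial_r, r^{-1}e_1, r^{-1}e_2) = \omega(e_1,e_2)$ for any $g$-orthonormal basis $(e_1,e_2)$ of $T_p\Sigma$. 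Since a $3$-form on a $3$-plane is determined by its value on a single basis, associativity of the cone plane is exactly $\omega(e_1,e_2)=1$, and the equality case of Cauchy--Schwarz forces $e_2 = Je_1$; this identifies associativity of $\mathrm{C}(\Sigma)$ with the condition $\left.\omega\right|_\Sigma = \mathrm{vol}_\Sigma$, which is the paper's stated equivalent of holomorphicity. Two minor points. First, the calibration inequality you invoke for the converse in your second paragraph is unnecessary: the pointwise contraction identity plus Cauchy--Schwarz already closes the argument, as your own final paragraph correctly recognizes, and that version is preferable since it is purely linear-algebraic and does not even use $d\phi = 0$. Second, you should state explicitly that $M$ is scaled to $\lambda = 1$ so that Theorem \ref{thm:Bar} furnishes the $\mathrm{G}_2$-structure on $\mathrm{C}(M)$ (rescaling changes neither holomorphicity of $\Sigma$ nor associativity of its cone), and that holomorphic curves carry the orientation induced by $J$, since both ``$\left.\omega\right|_\Sigma = \mathrm{vol}_\Sigma$'' and ``$\left.\phi\right|_{\mathrm{C}(\Sigma)} = \mathrm{vol}_{\mathrm{C}(\Sigma)}$'' are orientation-sensitive, whereas the condition $J(T_p\Sigma)=T_p\Sigma$ is not.
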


Consequently, if $\Sigma^2 \subset M^6$ is a holomorphic curve, then $\text{C}(\Sigma) \subset \text{C}(M)$ is a ($3$-dimensional) calibrated submanifold, and hence $\Sigma$ is a minimal surface in $M$. \\


\indent We now set our conventions for immersed submanifolds $u \colon \Sigma^k \to M^n$ of a Riemannian manifold $(M^n, \langle \cdot, \cdot \rangle)$.  In general, we let $\overline{\nabla}$ denote the Levi-Civita connection on $M$, and split $u^*(TM) = T\Sigma \oplus N\Sigma$ into tangential and normal parts.  For $X,Y \in \Gamma(T\Sigma)$ and $\eta \in \Gamma(N\Sigma)$, we have the decompositions
\begin{align*}
\overline{\nabla}_XY & = \nabla^\top_XY + \text{I\!I}(X,Y) \\
\overline{\nabla}_X\eta & = W_X\eta + \nabla^\perp_X\eta
\end{align*}
where $\nabla^\top$ is the Levi-Civita connection on $\Sigma$, where $\nabla^\perp$ is the normal connection, where $\text{I\!I}$ is the second fundamental form, and where $W$ is the shape operator satisfying the Weingarten equation
$$\langle W_XN, Y \rangle = -\langle \text{I\!I}(X,Y), N \rangle.$$
The curvature tensors of $\overline{\nabla}, \nabla^\top, \nabla^\perp$ will be denoted by $\overline{R}, R^\top, R^\perp$, respectively.  In particular, we recall the \textit{Ricci equation}
\begin{equation}
\overline{R}(X,Y, \eta, \xi) = R^\perp(X,Y, \eta, \xi) + \langle W_X\eta, W_Y\xi \rangle - \langle W_X\xi, W_Y\eta\rangle
\label{eq:RicciEq}
\end{equation}
for $X,Y \in T\Sigma$ and $\eta, \xi \in N\Sigma$.

\indent Suppose now that $u \colon \Sigma^2 \to M^6$ is an immersed holomorphic curve in a nearly-K\"{a}hler $6$-manifold $(M^6, \langle \cdot, \cdot \rangle, J, \omega)$.  In this case, it is easy to check that the second fundamental form enjoys the symmetries
\begin{align*}
 \text{I\!I}(X,JY) & = \text{I\!I}(JX,Y) & \text{I\!I}(X,JY) & = J\,\text{I\!I}(X,Y).
\end{align*}
When written in terms of the shape operator, these symmetries take the equivalent form
\begin{align}
 W_{JX}\eta & = -JW_X\eta & W_X(J\eta) & = J(W_X\eta) \label{eq:SymShapeOp}
\end{align}
for $X \in T\Sigma$ and $\eta \in N\Sigma$.  Note that the identity $\text{I\!I}(X,JY) = \text{I\!I}(JX,Y)$ is true more generally for any ($2$-dimensional) oriented minimal surface in a Riemannian manifold, where $J$ is the complex structure on $T\Sigma$ induced from the metric and orientation.  \\
\indent Finally, we remark for future use that the restriction of $P \colon TM \times TM \to TM$ to $T\Sigma \times N\Sigma$ maps into $N\Sigma$.  Indeed, by virtue of the symmetry $W_X(J\eta) = J(W_X\eta)$, the restricted map
$$P \colon T\Sigma \times N\Sigma \to N\Sigma$$
satisfies
$$P(X,\eta) = (\overline{\nabla}_XJ)(\eta) = (\nabla^\perp_XJ)(\eta)$$
for $X \in T\Sigma$ and $\eta \in N\Sigma$.

\section{Free-Boundary Holomorphic Curves in $\mathbb{S}^6$}

\indent \indent In this short section, we recall the standard strict nearly-K\"{a}hler structure on the $6$-sphere, set up the moving frame for holomorphic curves in $\mathbb{S}^6$, and prove Theorem 1.1.  Our argument relies on various holomorphic bundle structures that first appeared in \cite{bryant82}, though our notation follows \cite{madnick2021second}.

\subsection{The Round $6$-Sphere}

\indent \indent To begin, let us view $\mathbb{R}^7 = \text{Im}(\mathbb{O})$ as the imaginary octonions, and let $g_0$ denote the standard euclidean inner product on $\mathbb{R}^7$.  The so-called ``$7$-dimensional cross product" is the alternating bilinear map $\times \colon \text{Im}(\mathbb{O}) \times \text{Im}(\mathbb{O}) \to \text{Im}(\mathbb{O})$ given by
$$x \times y := \textstyle \frac{1}{2}(xy - yx).$$
Using the metric $g_0$ to lower an index, we can recast $\times$ as a tensor $\phi_0 \colon \text{Im}(\mathbb{O}) \times \text{Im}(\mathbb{O}) \times \text{Im}(\mathbb{O}) \to \mathbb{R}$ via
$$\phi_0(x,y,z) := g_0(x \times y, z).$$
The properties of octonionic multiplication show that $\phi_0$ is alternating.  In fact, $\phi_0 \in \Omega^3(\mathbb{R}^7)$ is the flat $\text{G}_2$-structure on $\mathbb{R}^7$.  It is clear that $\phi_0$ is torsion-free and that $\text{Hol}(g_0) = \{\text{Id}\}$. \\

\indent Let $(\mathbb{S}^6, \langle \cdot, \cdot \rangle)$ denote the round $6$-sphere with constant curvature $1$, and let us embed $\mathbb{S}^6 \subset \mathbb{R}^7$ in the standard way.  For each $p \in \mathbb{S}^6$, the linear map $J_p \colon T_p\mathbb{S}^6 \to T_p\mathbb{S}^6$ given by
\begin{align*}
J_p(x) & = p \times x
\end{align*}
satisfies $(J_p)^2 = -\text{Id}$ and $\langle J_px, J_py \rangle = \langle x,y \rangle$.  The map $J \colon T\mathbb{S}^6 \to T\mathbb{S}^6$ is the standard ($\text{G}_2$-invariant, orthogonal) almost-complex structure on the round $\mathbb{S}^6$.  Defining $\omega \in \Omega^2(\mathbb{S}^6)$ via
$$\omega(x,y) := \langle Jx, y\rangle$$
the triple $(\langle \cdot, \cdot \rangle, J, \omega)$ is an almost-Hermitian structure on $\mathbb{S}^6$.  In fact, $(\langle \cdot, \cdot \rangle, J, \omega)$ is a $\text{G}_2$-invariant, strict nearly-K\"{a}hler structure with $\lambda = 1$.  Moreover, the $3$-form $\Upsilon \in \Omega^{3,0}(\mathbb{S}^6)$
$$\Upsilon := \left.(\partial_r\,\lrcorner\,(\ast \phi) + i\phi)\right|_{\mathbb{S}^6}$$
is a complex volume form that satisfies $d\omega = 3\,\text{Im}(\Upsilon)$ and $d\,\text{Re}(\Upsilon) = 2\,\omega \wedge \omega$. \\
\indent Let $\overline{\nabla}$ denote the Levi-Civita connection on $\mathbb{S}^6$.  Since $(\langle \cdot, \cdot \rangle, J, \omega)$ is strictly nearly-K\"{a}hler, we have both $\overline{\nabla} J \neq 0$ and $\overline{\nabla} \omega \neq 0$.
Thus, as far as the nearly-K\"{a}hler structure of $\mathbb{S}^6$ is concerned, the Levi-Civita connection $\overline{\nabla}$ is perhaps not the most geometrically natural one.  Indeed, in many applications, it is more convenient to use the \textit{nearly-K\"{a}hler connection} $\overline{D}$ on $T\mathbb{S}^6$ defined by
\begin{align*}
\overline{D}_XY & := \overline{\nabla}_XY + \frac{1}{2}P(X,JY),
\end{align*}
where we recall $P(X,Y) = (\overline{\nabla}_XJ)(Y)$. The advantage of $\overline{D}$ is that both $J$ and $\omega$ are $\overline{D}$-parallel.

\indent For computations, we extend both $\overline{\nabla}$ and $\overline{D}$ to the complexified tangent bundle $T\mathbb{S}^6 \otimes_{\mathbb{R}} \mathbb{C}$ by $\mathbb{C}$-linearity.  As usual, we decompose $T\mathbb{S}^6 \otimes_{\mathbb{R}} \mathbb{C}$ into types with respect to $J$:
$$T\mathbb{S}^6 \otimes_{\mathbb{R}} \mathbb{C} = T^{1,0}\mathbb{S}^6 \oplus T^{0,1}\mathbb{S}^6.$$
Since $\overline{D}$ preserves $J$, its restriction to $T^{1,0}\mathbb{S}^6$ is a well-defined connection, which we continue to denote $\overline{D}$.  We caution that the restriction of $\overline{\nabla}$ to $T^{1,0}\mathbb{S}^6$, by contrast, does not yield a well-defined connection.



\subsection{Holomorphic Curves in $\mathbb{S}^6$}



\indent \indent Let $u \colon \Sigma^2 \to \mathbb{S}^6$ be a holomorphic curve in the round $6$-sphere, where $\mathbb{S}^6$ carries its standard strict nearly-K\"{a}hler structure discussed above.  Recalling the connection $\overline{D}$ on $T^{1,0}\mathbb{S}^6 \to \mathbb{S}^6$, we use the same symbol to denote its pullback to $u^*(T^{1,0}\mathbb{S}^6) \to \Sigma$.  
 Since $\Sigma$ is a Riemann surface, we may endow $u^*(T^{1,0}\mathbb{S}^6) \to \Sigma$ with the Koszul-Malgrange holomorphic structure for $\overline{D}$.  Since $u$ is an immersion, the complex line subbundle $T^{1,0}\Sigma \hookrightarrow u^*(T^{1,0}\mathbb{S}^6)$ is then a holomorphic line subbundle, and the quotient bundle $Q := u^*(T^{1,0}\mathbb{S}^6)/T^{1,0}\Sigma$ inherits a natural holomorphic structure as well. \\ 

\indent For computations, let $(e_1, \ldots, e_6)$ be a local $\text{SU}(3)$-frame for $T\mathbb{S}^6$.  Concretely, this means that $(e_1, \ldots, e_6)$ is an oriented orthonormal frame, that
\begin{align*}
Je_1 & = e_2 & Je_3 & = e_4 & Je_5 & = e_6
\end{align*}
and that
$$\Upsilon = (\omega_1 + i\omega_2) \wedge (\omega_3 + i\omega_4) \wedge (\omega_5 + i\omega_6),$$
where $(\omega_1, \ldots, \omega_6)$ is the dual coframe to $(e_1, \ldots, e_6)$.  We also let
\begin{align*}
f_1 & = \frac{1}{2}(e_1 - ie_2) & f_2 & = \frac{1}{2}(e_3 - ie_4) & f_3 & = \frac{1}{2}(e_5 - ie_6),
\end{align*}
so that $(f_1, f_2, f_3)$ is a local complex $\text{SU}(3)$-frame for $T^{1,0}\mathbb{S}^6$.  Finally, note that
\begin{align*}
\zeta_1 & = \omega_1 + i\omega_2 & \zeta_2 & = \omega_3 + i\omega_4 & \zeta_3 & = \omega_5 + i\omega_6
\end{align*}
is the local $\text{SU}(3)$-frame of $\Lambda^{1,0}(\mathbb{S}^6)$ dual to $(f_1, f_2, f_3)$.


\indent We now adapt frames to the holomorphic curve $u \colon \Sigma \to \mathbb{S}^6$.  Say that a local $\text{SU}(3)$-frame along $u(\Sigma)$ is an \textit{adapted $\text{U}(2)$-frame} if
\begin{align*}
T\Sigma & = \text{span}(e_1,e_2) & N\Sigma & = \text{span}(e_3, e_4, e_5, e_6).
\end{align*}
An equivalent requirement is that $T^{1,0}\Sigma = \text{span}_{\mathbb{C}}(f_1)$.  With respect to a (local) adapted $\text{U}(2)$-frame, the second fundamental form of $u$ can be expressed as
\begin{align*}
\text{I\!I}(e_1, e_1) & = \kappa_1 e_3 + \kappa_2 e_4 + \mu_1 e_5 - \mu_2 e_6 \\
\text{I\!I}(e_1, e_2) & = -\kappa_2 e_3 + \kappa_1 e_4 + \mu_2 e_5 + \mu_1 e_6 
\end{align*}
for some (frame-dependent) functions $\kappa = \kappa_1 + i\kappa_2$ and $\mu = \mu_1 + i\mu_2$.  In \cite[Lemma 4.3]{bryant82}, Bryant shows that
\begin{align*}
\Phi & \in \Gamma(\Lambda^{1,0}\Sigma \otimes \Lambda^{1,0}\Sigma \otimes Q) \\
\Phi & = \kappa\zeta_1 \otimes \zeta_1 \otimes (f_2) + \mu\zeta_1 + \zeta_1 \otimes (f_3)
\end{align*}
is a well-defined (frame-independent) holomorphic section, where the projection of a vector $v \in u^*(T^{1,0}\mathbb{S}^6)$ to the quotient bundle is being denoted $(v) \in Q$.  Notice that, as remarked in \cite[Lemma 4.4]{bryant82}, we have that $\Phi = 0$ identically if and only if $u$ is totally-geodesic. \\

\indent With the above structures in place, it is now simple to prove Theorem \ref{thm:Classical}, which we restate for convenience.

\begin{thm} Let $u \colon \Sigma^2 \to B$ be a compact immersed surface in a geodesic ball $B$ of the round $6$-sphere with $u(\partial \Sigma) \subset \partial B$.  If $u$ is a holomorphic curve, and if $u(\Sigma)$ meets $\partial B$ orthogonally, then $u(\Sigma)$ is totally geodesic.
\end{thm}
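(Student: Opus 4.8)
The plan is to follow Fraser--Schoen \cite{fraser2015uniqueness} and reduce the theorem to the vanishing of Bryant's holomorphic section $\Phi$, since by \cite[Lemma 4.4]{bryant82} one has $\Phi \equiv 0$ if and only if $u$ is totally geodesic. The whole argument then rests on two ingredients: first, that the free-boundary hypothesis forces $\Phi$ to satisfy a totally real boundary condition along $\partial\Sigma$; and second, that a holomorphic section obeying such a boundary condition must vanish identically, because the associated boundary Maslov index is negative. The contrast with Fraser--Schoen, whose rigidity theorem is stated only for disks, is instructive: there the relevant holomorphic object is a quadratic differential valued in $(\Lambda^{1,0}\Sigma)^{\otimes 2}$, whose degree on the doubled surface is negative only in genus zero. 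Here $\Phi$ is valued in the twisted bundle $(\Lambda^{1,0}\Sigma)^{\otimes 2}\otimes Q$, and it is the extra twisting supplied by $Q$ --- ultimately a reflection of the positive curvature of $\mathbb{S}^6$ --- that enters the boundary Maslov index and that I expect to remove any topological restriction.

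To extract the boundary condition, I would work in a local adapted $\text{U}(2)$-frame along $\partial\Sigma$ chosen so that $e_1$ is tangent to $\partial\Sigma$ and $e_2 = Je_1$ is the outward conormal of $\Sigma$. The orthogonality hypothesis says precisely that $e_2$ is the outward unit normal of the geodesic sphere $\partial B$, so that $e_1, e_3, e_4, e_5, e_6$ span $T(\partial B)$ and in particular $N\Sigma \subset T(\partial B)$ along the boundary. Because geodesic spheres in $\mathbb{S}^6$ are totally umbilic, their shape operator is a scalar multiple of the identity, which is invisible to the trace-free second-fundamental-form data $(\kappa, \mu)$ defining $\Phi$. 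Comparing the second fundamental form of $\Sigma$ with that of $\partial B$ in the $e_2$-direction should therefore constrain the phases of $\kappa$ and $\mu$ along $\partial\Sigma$, showing that $\Phi|_{\partial\Sigma}$ takes values in a fixed totally real subbundle $F \subset \big((\Lambda^{1,0}\Sigma)^{\otimes 2}\otimes Q\big)\big|_{\partial\Sigma}$ determined by the boundary tangent direction and by the inclusion $N\Sigma \subset T(\partial B)$.

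With the boundary condition in hand, the vanishing of $\Phi$ would follow from complex-analytic index theory: doubling $\Sigma$ across $\partial\Sigma$ and extending $\Phi$ by Schwarz reflection produces a holomorphic section $\widehat\Phi$ of a bundle $\widehat E$ over a closed Riemann surface, with $c_1(\widehat E) = \mu\big((\Lambda^{1,0}\Sigma)^{\otimes 2}\otimes Q, F\big)$; a nonzero holomorphic section has only isolated zeros of positive multiplicity, so its existence is obstructed once this Maslov index is negative. I expect the main difficulty to lie exactly here, in two parts. First, one must pin down the totally real subbundle $F$ and compute its winding, since the doubled degree is governed not by the naive first Chern number of $(\Lambda^{1,0}\Sigma)^{\otimes 2}\otimes Q$ but by the boundary data $F$. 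Second, because $Q$ has complex rank two, negativity of the determinant degree alone does not preclude sections, so one must either exploit the holomorphic filtration of $u^*(T^{1,0}\mathbb{S}^6)$ by osculating spaces to treat the $\kappa$- and $\mu$-components separately, or argue directly that the boundary Maslov index of the pair $(E,F)$ is negative for every compact $\Sigma$ with nonempty boundary. Granting $\Phi \equiv 0$, Bryant's lemma gives that $u$ is totally geodesic; a totally geodesic free-boundary surface in a geodesic ball is then an equatorial disk, which also accounts for the disk topology noted after the theorem.
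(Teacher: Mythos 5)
Your reduction to Bryant's section $\Phi$ and \cite[Lemma 4.4]{bryant82} is the right starting point, but the proposal has a genuine gap at its central step: neither of the two claims it rests on is established, and the second is doubtful. You never actually derive the boundary condition --- it is only asserted that comparing second fundamental forms ``should constrain the phases of $\kappa$ and $\mu$'' --- and you never compute the boundary Maslov index of the pair $(E,F)$ whose negativity is supposed to force $\Phi \equiv 0$; indeed you concede that, since $Q$ has complex rank two, negativity of a determinant degree would not by itself exclude nonzero holomorphic sections. Your guiding hope that the twisting by $Q$ ``removes any topological restriction'' is also not supported by the bulk topology: for a closed holomorphic curve one has $\deg u^*(T^{1,0}\mathbb{S}^6) = 0$, hence $\deg\big((\Lambda^{1,0}\Sigma)^{\otimes 2}\otimes Q\big) = -5\chi(\Sigma)$, which is nonnegative as soon as the genus is positive; and Bryant's construction of non-totally-geodesic compact holomorphic curves of every genus shows that no degree argument alone can force $\Phi$ to vanish. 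So in your framework everything would have to come from the boundary subbundle $F$, which is precisely the datum left unidentified.

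In fact, if you carry out the computation you sketch in your second paragraph, you obtain something far stronger than a reality or phase constraint, and it finishes the proof with no index theory at all --- this is the paper's argument. Take $e_1$ tangent to $\partial\Sigma$ and $e_2 = \nu$ the outward unit normal of $\partial B$ (your frame). Since geodesic spheres are totally umbilic with shape operator $A = c\,\mathrm{Id}$, and since $e_1$ is tangent to $\partial B$, along $\partial\Sigma$ we get
$\text{I\!I}(e_1,e_2) = (\overline{\nabla}_{e_1}\nu)^{N\Sigma} = \big(c\,e_1\big)^{N\Sigma} = 0$.
In an adapted $\mathrm{U}(2)$-frame, $\text{I\!I}(e_1,e_2) = -\kappa_2 e_3 + \kappa_1 e_4 + \mu_2 e_5 + \mu_1 e_6$, so this forces $\kappa = \mu = 0$ along $\partial\Sigma$; equivalently, for a holomorphic curve $\text{I\!I}(e_1,e_2) = J\,\text{I\!I}(e_1,e_1)$, so the vanishing of one component kills all of $\text{I\!I}$ at the point. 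The umbilic term is thus not ``invisible'' to $(\kappa,\mu)$, as you put it: it annihilates them on the boundary, because $A(e_1)$ is tangent to $\Sigma$ there. Hence $\Phi$ vanishes identically along $\partial\Sigma$, and a holomorphic section of a holomorphic bundle vanishing on a boundary arc vanishes on the whole connected component by unique continuation; Bryant's lemma then gives that $u$ is totally geodesic. No doubling, no Maslov index, and no treatment of the rank-two filtration is needed: the free-boundary condition at an umbilic hypersurface is a pointwise \emph{vanishing} condition on $\Phi$, not merely a totally real one.
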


\begin{proof} Let $B \subset \mathbb{S}^6$ denote a geodesic ball, let $S := \partial B$ denote its boundary sphere, and let $\nu$ denote the outward-pointing unit normal vector field to $S$.  Let $A \colon TS \to TS$ denote the shape operator of $S \subset \mathbb{S}^6$, i.e.:
$$A(X) = \overline{\nabla}_X\nu$$
The crucial point is that geodesic spheres $S \subset \mathbb{S}^6$ are totally umbilic, meaning that $A = c\,\text{Id}$ for some constant $c > 0$ (namely, the principal curvature).

\indent Let $u \colon \Sigma^2 \to B$ be a holomorphic curve in $B$ with $u(\partial \Sigma) \subset S$ and $u(\Sigma)$ meeting $S$ orthogonally.  For $X \in u^*(T\mathbb{S}^6) = T\Sigma \oplus N\Sigma$, we will write $X = X^{T\Sigma} + X^{N\Sigma}$ for its decomposition into tangential and normal parts.  Now, let $(e_1, e_2)$ be a local oriented orthonormal frame for $T\Sigma$ defined on a neighborhood of a (possibly small) arc $C \subset \partial \Sigma$ such that $\nu = e_1$ along $C$.  Complete $(e_1, e_2)$ to a $\text{U}(2)$-adapted frame $(e_1, e_2, \ldots, e_6)$.  Along $C \subset \partial \Sigma$, we now compute   
$$\text{I\!I}(e_1, e_2) = (\overline{\nabla}_{e_2}\nu)^{N\Sigma} = [A(e_2)]^{N\Sigma} = (ce_2)^{N\Sigma} = 0.$$
Consequently, $\Phi = 0$ along $C \subset \partial \Sigma$, so by holomorphicity, $\Phi = 0$ on all of $\Sigma$, so $u$ is totally-geodesic.
\end{proof}

\section{Holomorphic Curves with Lagrangian Boundary}

\indent \indent We now turn to holomorphic curves with Lagrangian boundary in nearly-K\"{a}hler $6$-manifolds.  In $\S$4.1, we derive a second variation of area formula (Theorem \ref{thm:SecondVarMain}).  Separately, in $\S$4.2 we recall a generalization of the Riemann-Roch formula (Theorem \ref{thm:Riemann-Roch}) that is suited to holomorphic curves with boundary in almost-complex manifolds.  Finally, in $\S$4.3, we deduce the lower bound for the Morse index of area stated in Theorem \ref{thm:LagIndexBound}.  For the Morse index of \textit{energy}, the interested reader may consult \cite{lima2017bounds}.  This section is independent of $\S$3, but draws on the notation set in $\S$2.

\subsection{The Second Variation of Area}


\indent \indent Let $u \colon \Sigma^2 \to M^n$ be a compact oriented minimal surface in a Riemannian manifold $M$ such that $u(\partial \Sigma) \subset L$ for some fixed submanifold $L \subset M$.  An \textit{admissible variation vector field} is a normal vector field $\eta \in \Gamma(N\Sigma)$ such that $\eta$ is tangent to $L$ along $\partial \Sigma$.  Let $\nu \in \Gamma(\left.T\Sigma\right|_{\partial \Sigma})$ denote the outer unit conormal, i.e., the unit vector field that is orthogonal to $\partial \Sigma$ and outward-pointing. 

\indent It is well-known \cite{fraser1998free} that the second variation of area of $u$ at an admissible variation vector field $\eta \in \Gamma(N\Sigma)$ is
\begin{equation}
(\delta^2A)(\eta) = \int_\Sigma \Vert \nabla^\perp \eta \Vert^2 - \Vert W\eta \Vert^2 - \overline{R}(\eta, e_i, e_i, \eta) + \int_{\partial \Sigma} \langle \overline{\nabla}_\eta \eta, \nu \rangle
\label{eq:SecondVar0}
\end{equation}
where $(e_1, e_2)$ is any oriented orthonormal frame for $T\Sigma$.  Here, we recall our conventions from $\S$2.3 that $\overline{\nabla}$ is the ambient covariant derivative, that $\nabla^\top, \nabla^\perp$ are the induced tangential and normal connections, and the corresponding curvature tensors are denoted $\overline{R}, R^\top, R^\perp$, respectively. \\


\indent We now specialize to the case where $u \colon \Sigma^2 \to M^6$ is a compact holomorphic curve in a nearly-K\"{a}hler $6$-manifold $M^6$ having boundary $u(\partial \Sigma) \subset L$ for some fixed submanifold $L \subset M$.  For the moment, we make no assumptions about $L$.  To begin, we consider the zeroth-order terms $\Vert W\eta \Vert^2$ and $\overline{R}(\eta, e_i, e_i, \eta)$.  We calculate:

\begin{lem} \label{lem:L1} We have
$$\Vert W\eta \Vert^2 + \overline{R}(\eta, e_i, e_i, \eta) = -R^\perp(e_1, e_2, \eta, J\eta) + 2\lambda^2 \Vert \eta \Vert^2.$$
\end{lem}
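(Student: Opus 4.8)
The plan is to prove Lemma \ref{lem:L1} by computing each of the two zeroth-order terms $\Vert W\eta \Vert^2$ and $\overline{R}(\eta, e_i, e_i, \eta)$ and then combining them. Throughout I will exploit three structural facts established in $\S$2.3: the shape-operator symmetries (\ref{eq:SymShapeOp}), namely $W_{JX}\eta = -JW_X\eta$ and $W_X(J\eta) = J(W_X\eta)$; the Ricci equation (\ref{eq:RicciEq}); and the curvature identity (\ref{eq:KeyCurvId}) valid on any nearly-K\"ahler manifold. Since $\Sigma$ is a holomorphic curve I may take the orthonormal frame to be adapted, with $e_2 = Je_1$, and abbreviate $e := e_1$ so that $(e, Je)$ is an oriented orthonormal frame for $T\Sigma$; this lets me rewrite sums over $i$ as explicit two-term expressions in $e$ and $Je$.

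\textbf{Step 1: the shape-operator term.} First I would expand $\Vert W\eta \Vert^2 = \langle W_e\eta, W_e\eta \rangle + \langle W_{Je}\eta, W_{Je}\eta \rangle$. Applying $W_{Je}\eta = -JW_e\eta$ and the fact that $J$ is an isometry, the second summand equals the first, giving $\Vert W\eta \Vert^2 = 2\Vert W_e\eta \Vert^2$. This collapses the shape-operator contribution into a single term that I expect to reappear, with opposite sign, from the Ricci equation in Step 3.

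\textbf{Step 2: the curvature term via the Gray identity.} Next I would attack $\overline{R}(\eta, e_i, e_i, \eta) = \overline{R}(\eta, e, e, \eta) + \overline{R}(\eta, Je, Je, \eta)$. The goal is to recognize this as an instance of (\ref{eq:KeyCurvId}). Setting $X = \eta$ and $Y = e$ in (\ref{eq:KeyCurvId}) produces $\overline{R}(\eta, e, e, \eta) + \overline{R}(\eta, Je, Je, \eta) + \overline{R}(\eta, J\eta, e, Je) = 2\Vert P(\eta, e) \Vert^2$, using $JY = Je$. The constant-type formula (\ref{eq:ConstantType}) evaluates $\Vert P(\eta, e)\Vert^2 = \lambda^2\Vert \eta\Vert^2$, since $e$ is a unit vector orthogonal to both $\eta$ and $J\eta$ (because $\eta \in N\Sigma$ and $Je \in T\Sigma$). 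Thus $\overline{R}(\eta, e_i, e_i, \eta) = 2\lambda^2\Vert\eta\Vert^2 - \overline{R}(\eta, J\eta, e, Je)$.

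\textbf{Step 3: converting the leftover ambient curvature to normal curvature.} The term $\overline{R}(\eta, J\eta, e, Je)$ has all four slots mixing tangent ($e, Je$) and normal ($\eta, J\eta$) directions, so I would feed it into the Ricci equation (\ref{eq:RicciEq}). Rewriting with $X = e$, $Y = Je$ and normal vectors $\eta, J\eta$ gives $\overline{R}(e, Je, \eta, J\eta) = R^\perp(e, Je, \eta, J\eta) + \langle W_e\eta, W_{Je}(J\eta)\rangle - \langle W_e(J\eta), W_{Je}\eta\rangle$. Using both symmetries (\ref{eq:SymShapeOp}) — $W_{Je} = -JW_e$ on $N\Sigma$ and $W_e J = J W_e$ — the two Weingarten terms should simplify to $-2\Vert W_e\eta\Vert^2$ (after tracking the signs through the $J$'s and the isometry property), exactly cancelling the $2\Vert W_e\eta\Vert^2$ from Step 1. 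Adding everything, the shape-operator contributions vanish and I am left with $\Vert W\eta\Vert^2 + \overline{R}(\eta, e_i, e_i, \eta) = -R^\perp(e, Je, \eta, J\eta) + 2\lambda^2\Vert\eta\Vert^2$, which is the claim (with $e_1 = e$, $e_2 = Je$).

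The main obstacle I anticipate is the bookkeeping in Step 3: correctly ordering the curvature arguments to match the stated antisymmetry conventions of $\overline{R}$ and $R^\perp$, and carefully tracking the factors of $J$ and the resulting signs in the two Weingarten inner products so that they combine to precisely $-2\Vert W_e\eta\Vert^2$ rather than, say, $+2$ or $0$. A secondary subtlety is verifying that $e$, $\eta$, $J\eta$ are pairwise orthogonal unit/normal vectors so that (\ref{eq:ConstantType}) reduces cleanly; this relies on $\eta$ being a genuine unit-normalizable normal field and on $J$ preserving the splitting $T\Sigma \oplus N\Sigma$, both of which hold for holomorphic curves.
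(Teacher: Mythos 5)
Your proposal is correct and takes essentially the same route as the paper's proof: apply Gray's identity (\ref{eq:KeyCurvId}) together with the constant-type equation (\ref{eq:ConstantType}) to trade the ambient curvature sum for $\overline{R}(e_1,e_2,\eta,J\eta)$ plus $2\lambda^2\Vert\eta\Vert^2$, then convert that term via the Ricci equation (\ref{eq:RicciEq}) and the shape-operator symmetries (\ref{eq:SymShapeOp}). One cosmetic sign remark: in your displayed Ricci equation the two Weingarten terms actually simplify to $+2\Vert W_e\eta\Vert^2$ (as in the paper, where they produce $+\Vert W\eta\Vert^2$); the cancellation with Step 1 happens because $\overline{R}(e,Je,\eta,J\eta)$ enters your final assembly with a minus sign, so your concluding identity is nevertheless exactly right.
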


\begin{proof} First, the nearly-K\"{a}hler curvature identity (\ref{eq:KeyCurvId}) and the constant type equation (\ref{eq:ConstantType}) give
$$\overline{R}(\eta, e_i, e_i, \eta) + \overline{R}(e_1, e_2, \eta, J\eta) = 2 \Vert P(e_1, \eta) \Vert^2 = 2 \lambda^2 \Vert \eta \Vert^2.$$
Now, the Ricci equation (\ref{eq:RicciEq}), followed by the shape operator symmetries (\ref{eq:SymShapeOp}), shows that
\begin{align*}
\overline{R}(e_1, e_2, \eta, J\eta)  & = R^\perp(e_1, e_2, \eta, J\eta) + \langle W_{e_1}\eta, W_{e_2}(J\eta) \rangle - \langle W_{e_1}(J\eta), W_{e_2}\eta \rangle \\
& = R^\perp(e_1, e_2, \eta, J\eta) + \langle W_{e_1}\eta, W_{e_1}\eta \rangle + \langle W_{e_2}\eta, W_{e_2}\eta \rangle \\
& = R^\perp(e_1, e_2, \eta, J\eta) + \Vert W\eta \Vert^2.
\end{align*}
\end{proof}

\indent In view of Lemma \ref{lem:L1}, our second variation formula (\ref{eq:SecondVar0}) now reads:
\begin{equation}
(\delta^2A)(\eta) = \int_\Sigma \Vert \nabla^\perp \eta \Vert^2 + R^\perp(e_1, e_2, \eta, J\eta) - 2\lambda^2 \Vert \eta \Vert^2 + \int_{\partial \Sigma} \langle \overline{\nabla}_\eta \eta, \nu \rangle.
\label{eq:SecondVarV1}
\end{equation}
To handle the first two terms, we require a few pieces of notation.  First, we recall the tensor $P \colon T\Sigma \times N\Sigma \to N\Sigma$ given by
\begin{align*}
P(X,\eta) & = (\nabla^\perp_XJ)(\eta) = (\overline{\nabla}_XJ)(\eta).
\end{align*}
Second, for a fixed $\eta \in \Gamma(N\Sigma)$, we let $\alpha_\eta \in \Omega^1(\Sigma)$ denote the $1$-form on $\Sigma$ given by
$$\alpha_\eta(X) = \langle \nabla^\perp_X \eta, J\eta \rangle.$$
Finally, we let $\mathscr{D} \colon \Gamma(N\Sigma) \to \Omega^1(\Sigma) \otimes \Gamma(N\Sigma)$ denote the differential operator
\begin{align}
\mathscr{D}_X\eta & = \nabla^\perp_X\eta + J\nabla^\perp_{JX}\eta.
\label{eq:D-Op}
\end{align}
We now calculate:

\begin{lem} \label{lem:L2}
We have
$$\Vert \nabla^\perp \eta \Vert^2 + R^\perp(e_1, e_2, \eta, J\eta) = \frac{1}{2} \Vert \mathscr{D}\eta \Vert^2 + \langle P(e_1, J\eta), \mathscr{D}_{e_1}\eta \rangle  + d\alpha_\eta(e_1, e_2).$$
\end{lem}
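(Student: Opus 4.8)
The plan is to reduce everything to a pointwise computation in a local adapted $\text{U}(2)$-frame with $Je_1 = e_2$, abbreviating $A = \nabla^\perp_{e_1}\eta$ and $B = \nabla^\perp_{e_2}\eta$. The first step is purely algebraic. Using $Je_2 = -e_1$, the operator (\ref{eq:D-Op}) gives $\mathscr{D}_{e_1}\eta = A + JB$ and $\mathscr{D}_{e_2}\eta = B - JA$, and since $J$ is orthogonal one finds $\Vert \mathscr{D}_{e_1}\eta \Vert^2 = \Vert \mathscr{D}_{e_2}\eta \Vert^2 = \Vert A \Vert^2 + \Vert B \Vert^2 + 2\langle A, JB \rangle$. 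Hence $\tfrac{1}{2}\Vert \mathscr{D}\eta \Vert^2 = \Vert \nabla^\perp \eta \Vert^2 + 2\langle A, JB\rangle$, so that proving the lemma is equivalent to establishing $R^\perp(e_1,e_2,\eta,J\eta) - 2\langle A, JB\rangle = \langle P(e_1,J\eta), \mathscr{D}_{e_1}\eta \rangle + d\alpha_\eta(e_1,e_2)$.

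The second step is to expand the two-form term by Cartan's formula, $d\alpha_\eta(e_1,e_2) = e_1(\alpha_\eta(e_2)) - e_2(\alpha_\eta(e_1)) - \alpha_\eta([e_1,e_2])$, differentiating each inner product $\langle \nabla^\perp_{e_j}\eta, J\eta\rangle$ via metric-compatibility of $\nabla^\perp$. The essential subtlety is that $J$ is not $\nabla^\perp$-parallel along $\Sigma$: from $\S2.3$ one has $\nabla^\perp_X(J\eta) = P(X,\eta) + J\nabla^\perp_X\eta$, so each differentiation produces an extra $P$-term. The second-derivative terms assemble into the normal curvature $R^\perp(e_1,e_2,\eta,J\eta)$; the cross terms combine as $\langle B, JA\rangle - \langle A, JB\rangle = -2\langle A, JB\rangle$ (using $\langle B, JA\rangle = -\langle A, JB\rangle$); and the correction terms leave a remainder $\langle B, P(e_1,\eta)\rangle - \langle A, P(e_2,\eta)\rangle$. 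Substituting the resulting expression for $d\alpha_\eta(e_1,e_2)$ into the equivalent identity above and cancelling the common $R^\perp$ and $-2\langle A, JB\rangle$ reduces the lemma to the single residual relation $\langle P(e_1,J\eta), A + JB\rangle = \langle A, P(e_2,\eta)\rangle - \langle B, P(e_1,\eta)\rangle$.

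The final step closes this residual relation using only the algebraic symmetries of $P$. I would rewrite $P(e_1,J\eta) = -JP(e_1,\eta)$ from the first identity in (\ref{eq:TorsSym}) and use orthogonality of $J$ to turn the left-hand side into $\langle P(e_1,\eta), JA\rangle - \langle P(e_1,\eta), B\rangle$. Then the nearly-Kähler antisymmetry $P(X,Y) = -P(Y,X)$, combined with $P(X,JY) = -JP(X,Y)$, yields $P(e_2,\eta) = P(Je_1,\eta) = -JP(e_1,\eta)$; this is exactly the identity that converts $\langle A, P(e_2,\eta)\rangle$ into $\langle P(e_1,\eta), JA\rangle$, after which both sides of the residual relation match term by term.

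I expect the main obstacle to be the bookkeeping in the second step: tracking signs consistently through the non-parallelism of $J$ and ensuring the $P$-corrections are not dropped, since it is precisely these terms — and not the curvature or the cross terms, which cancel identically — that survive and must be matched against $\langle P(e_1,J\eta), \mathscr{D}_{e_1}\eta\rangle$. The derivation of $P(Je_1,\eta) = -JP(e_1,\eta)$ from the nearly-Kähler condition is the conceptual crux that makes the residual relation collapse, and it is the one place where strictness of the nearly-Kähler structure (rather than mere minimality) is genuinely used.
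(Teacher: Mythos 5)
Your proposal is correct and takes essentially the same route as the paper's proof: the same algebraic splitting $\tfrac{1}{2}\Vert \mathscr{D}\eta \Vert^2 = \Vert \nabla^\perp \eta \Vert^2 + 2\langle \nabla^\perp_{e_1}\eta, J\nabla^\perp_{e_2}\eta \rangle$, the same Cartan-formula expansion of $d\alpha_\eta(e_1,e_2)$ with the $P$-correction terms coming from $\nabla^\perp_X(J\eta) = P(X,\eta) + J\nabla^\perp_X\eta$, and the same use of the symmetries $P(X,JY) = -JP(X,Y)$ and $P(Je_1,\eta) = -JP(e_1,\eta)$ to identify the leftover terms with $\langle P(e_1,J\eta), \mathscr{D}_{e_1}\eta\rangle$ (the paper folds this into the $d\alpha_\eta$ computation rather than isolating a residual relation, a purely organizational difference). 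One small correction to your closing remark: this step uses only the nearly-K\"{a}hler antisymmetry $P(X,Y) = -P(Y,X)$, not strictness --- the lemma and its proof hold verbatim when $\lambda = 0$, where $P \equiv 0$ in the K\"{a}hler case; strictness enters only later, in deducing the index bound from the second variation formula.
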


\begin{proof} First, from
$$\frac{1}{2}\Vert \mathscr{D}\eta \Vert^2 =  \Vert \mathscr{D}_{e_1}\eta\Vert^2 = \Vert \nabla^\perp_{e_1}\eta \Vert^2 + \Vert \nabla^\perp_{e_2}\eta \Vert^2 + 2\langle \nabla^\perp_{e_1}\eta, J\nabla^\perp_{e_2}\eta \rangle$$
we observe that
\begin{equation}
\Vert \nabla^\perp \eta \Vert^2 = \frac{1}{2}\Vert \mathscr{D} \eta \Vert^2 -  2\langle \nabla^\perp_{e_1}\eta, J\nabla^\perp_{e_2}\eta \rangle. \label{eq:A}
\end{equation}
Second, we calculate
\begin{align*}
e_1(\alpha_\eta(e_2)) & = \langle \nabla^\perp_{e_1}\nabla^\perp_{e_2}\eta, J\eta \rangle + \langle \nabla^\perp_{e_2}\eta, P(e_1, \eta) \rangle - \langle \nabla^\perp_{e_1}\eta, J\nabla^\perp_{e_2}\eta \rangle \\
e_2(\alpha_\eta(e_1)) & = \langle \nabla^\perp_{e_2}\nabla^\perp_{e_1}\eta, J\eta \rangle + \langle \nabla^\perp_{e_1}\eta, P(e_2, \eta) \rangle + \langle \nabla^\perp_{e_1}\eta, J\nabla^\perp_{e_2}\eta \rangle
\end{align*}
and hence
\begin{align*} 
d\alpha_\eta(e_1, e_2) & = e_1(\alpha_\eta(e_2)) - e_2(\alpha_\eta(e_1)) - \alpha_\eta([e_1, e_2]) \notag \\
& = \langle (\nabla^\perp_{e_1}\nabla^\perp_{e_2} - \nabla^\perp_{e_2}\nabla^\perp_{e_1})\eta, J\eta \rangle - \alpha_\eta([e_1, e_2]) - 2\langle \nabla^\perp_{e_1}\eta, J\nabla^\perp_{e_2} \eta \rangle \notag \\
& \ \ \ \ \ \ \ \ \ \ \ \ \ \ + \langle \nabla^\perp_{Je_1}\eta, P(e_1, \eta) \rangle - \langle \nabla^\perp_{e_1}\eta, P(Je_1, \eta) \rangle  \notag \\
& =  R^\perp(e_1, e_2, \eta, J\eta) - 2\langle \nabla^\perp_{e_1}\eta, J\nabla^\perp_{e_2} \eta \rangle + \langle \nabla^\perp_{Je_1}\eta - J\nabla^\perp_{e_1}\eta, P(e_1, \eta) \rangle \notag \\
& =  R^\perp(e_1, e_2, \eta, J\eta) - 2\langle \nabla^\perp_{e_1}\eta, J\nabla^\perp_{e_2} \eta \rangle - \langle \mathscr{D}_{e_1}\eta, P(e_1, J\eta) \rangle 
\end{align*}
so that
\begin{equation}
- 2\langle \nabla^\perp_{e_1}\eta, J\nabla^\perp_{e_2} \eta \rangle = -R^\perp(e_1, e_2, \eta, J\eta) + \langle P(e_1, J\eta), \mathscr{D}_{e_1}\eta \rangle + d\alpha_\eta(e_1, e_2). \label{eq:B} 
\end{equation}
From (\ref{eq:A}) and (\ref{eq:B}), we obtain the result.
\end{proof}

\begin{rmk} The formula of Lemma \ref{lem:L2} implies that the quantity $\langle P(e_1, J\eta), \mathscr{D}_{e_1}\eta \rangle$ is independent of the choice of orthonormal frame $(e_1, e_2)$.  Here is another way to see this.  For $\eta \in \Gamma(N\Sigma)$, consider the tensor $G_\eta \colon T\Sigma \otimes T\Sigma \to \mathbb{R}$ via
$$G_\eta(X,Y) = \langle P(X,J\eta), \mathscr{D}_Y\eta\rangle = \textstyle \frac{1}{3}\,d\omega(X, J\eta, \mathscr{D}_Y\eta).$$
The symmetries of $P$ and $\mathscr{D}$ show that $G_\eta(X,Y) = G_\eta(JX,JY)$, so $G_\eta(e_1, e_1) = G_\eta(e_2, e_2)$.  It follows that $\langle P(e_1, J\eta), \mathscr{D}_{e_1}\eta\rangle = \frac{1}{2}\,\text{tr}(G_\eta)$, which is clearly independent of $(e_1, e_2)$.
\end{rmk}

\indent Using Lemma \ref{lem:L2} in (\ref{eq:SecondVarV1}), followed by Stokes' Theorem, our second variation formula is
\begin{equation}
(\delta^2A)(\eta) = \int_\Sigma \frac{1}{2} \Vert \mathscr{D}\eta \Vert^2 + \langle P(e_1, J\eta), \mathscr{D}_{e_1}\eta \rangle  - 2\lambda^2 \Vert \eta \Vert^2 + \int_{\partial \Sigma} \langle \overline{\nabla}_\eta \eta, \nu \rangle + \langle \nabla^\perp_T\eta, J\eta \rangle
\label{eq:SecondVarV2}
\end{equation}
where $T \in \Gamma(T\Sigma|_{\partial \Sigma})$ is the positively-oriented unit vector field tangent to $\partial \Sigma$.  Thus far, we have not imposed any conditions on the submanifold $L$.  We now suppose that $L$ is Lagrangian and again exploit the fact that $u$ is holomorphic and $M$ is nearly-K\"{a}hler:

\begin{lem} \label{lem:L3} If $L$ is Lagrangian, then
$$\langle \overline{\nabla}_\eta \eta, \nu \rangle + \langle \nabla^\perp_T\eta, J\eta \rangle = 0.$$
\end{lem}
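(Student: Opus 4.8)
The plan is to reinterpret each of the two boundary terms geometrically, using the Lagrangian condition on $L$ and the holomorphicity of $u$ to pin down how the relevant vectors sit inside $TL$ and $NL$, and then to reduce the whole identity to a symmetry of the cubic form of the second fundamental form of $L$. First I would record the subbundle memberships along $\partial \Sigma$. Fixing the orientation convention consistent with $\S4.1$ (so that $\nu = e_1$, $T = e_2 = Je_1$), we have $J\nu = T$ and $JT = -\nu$. Since $T$ is tangent to the boundary curve, $T \in TL$; as $L$ is Lagrangian, $J(TL) = NL$, whence $\nu = -JT \in NL$. Admissibility of $\eta$ gives $\eta \in N\Sigma \cap TL$ along $\partial \Sigma$; holomorphicity makes $N\Sigma$ $J$-invariant, so $J\eta \in N\Sigma$, while the Lagrangian condition gives $J\eta \in NL$. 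Thus $J\eta \in N\Sigma \cap NL$ and $\nu \in NL$.

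Second, I would express both terms through the second fundamental form $\text{I\!I}^L$ of $L \subset M$. Because the admissible variation keeps $\partial \Sigma$ on $L$, the curves $t \mapsto F(t,p)$ for $p \in \partial \Sigma$ lie in $L$ with velocity $\eta$, so the normal-to-$L$ part of their acceleration is $\text{I\!I}^L(\eta, \eta)$; pairing with $\nu \in NL$ gives $\langle \overline{\nabla}_\eta \eta, \nu \rangle = \langle \text{I\!I}^L(\eta,\eta), \nu \rangle$. For the second term, since $J\eta \in N\Sigma$ we may drop the projection and write $\langle \nabla^\perp_T \eta, J\eta \rangle = \langle \overline{\nabla}_T \eta, J\eta \rangle$; as $T, \eta \in TL$ and $J\eta \in NL$, only the $\text{I\!I}^L$-part survives, giving $\langle \overline{\nabla}_T \eta, J\eta \rangle = \langle \text{I\!I}^L(T,\eta), J\eta \rangle$. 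Introducing the cubic form $C(X,Y,Z) := \langle \text{I\!I}^L(X,Y), JZ \rangle$ on $TL$ and using $\nu = -JT$, the two terms become $-C(\eta,\eta,T)$ and $C(T,\eta,\eta)$, so it remains to show $C(\eta,\eta,T) = C(T,\eta,\eta)$.

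Third, I would establish the needed symmetry of $C$. Since $\text{I\!I}^L$ is symmetric, $C$ is symmetric in its first two slots. Differentiating $\langle Y, JZ \rangle = 0$ (valid for $Y,Z \in TL$) and using $\overline{\nabla}_X(JZ) = (\overline{\nabla}_X J)Z + J\overline{\nabla}_X Z = P(X,Z) + J\overline{\nabla}_X Z$ together with $JY \in NL$ yields $C(X,Y,Z) - C(X,Z,Y) = -\langle P(X,Z), Y \rangle$. The nearly-K\"{a}hler identity $\langle P(X,Z), Y \rangle = (\nabla_X\omega)(Z,Y) = \tfrac{1}{3}\,d\omega(X,Z,Y)$ shows this defect is totally skew, hence vanishes whenever two arguments coincide. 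Taking $(X,Y,Z) = (\eta,\eta,T)$ gives $C(\eta,\eta,T) = C(\eta,T,\eta) = C(T,\eta,\eta)$, the last equality by symmetry in the first two slots. Therefore the sum of the two boundary terms equals $-C(\eta,\eta,T) + C(T,\eta,\eta) = 0$.

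The main obstacle is the clean variational identification $\langle \overline{\nabla}_\eta \eta, \nu \rangle = \langle \text{I\!I}^L(\eta,\eta), \nu \rangle$: one must argue carefully that $\overline{\nabla}_\eta \eta$ is the acceleration of a boundary-preserving variation whose normal-to-$L$ component is genuinely $\text{I\!I}^L(\eta,\eta)$, and that $\nu \in NL$ (which itself uses the Lagrangian condition). The remaining subtlety --- the appearance of the nearly-K\"{a}hler torsion $P$ in the symmetry of $C$ --- resolves itself, since $d\omega$ is a $3$-form and the repeated argument $\eta$ forces the correction term to vanish.
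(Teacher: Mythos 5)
Your proof is correct, but it is organized along a genuinely different route from the paper's. The paper never introduces the second fundamental form of $L$: it proves the identity by a single chain of scalar equalities, starting from $\langle [T,\eta], J\eta \rangle = 0$ (the bracket of two fields tangent to $L$ is tangent to $L$, while $J\eta$ is normal to $L$), then differentiating $\langle T, J\eta \rangle = 0$ in the direction $\eta$, invoking the nearly-K\"{a}hler skewness $(\overline{\nabla}_\eta J)(\eta) = 0$, and finishing with $JT = -\nu$. Your argument repackages exactly these ingredients through the Gauss formula for $L \subset M$: the symmetry of $\text{I\!I}^L$ plays the role of the paper's bracket identity, and your symmetry-defect computation $C(X,Y,Z) - C(X,Z,Y) = -\langle P(X,Z), Y \rangle$, which vanishes at $(\eta,\eta,T)$ because $\langle P(\cdot,\cdot),\cdot\rangle = \frac{1}{3}d\omega$ is totally skew, is precisely the paper's differentiation of the Lagrangian orthogonality combined with the nearly-K\"{a}hler condition. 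What your version buys is conceptual clarity: it exhibits the vanishing of the boundary term as a (partial) symmetry of the fundamental cubic $C(X,Y,Z) = \langle \text{I\!I}^L(X,Y), JZ \rangle$ of the Lagrangian --- fully symmetric in the K\"{a}hler case, symmetric modulo $d\omega$ here --- and it parallels the umbilicity mechanism of $\S$3 by identifying $\langle \overline{\nabla}_\eta \eta, \nu \rangle$ with a second-fundamental-form term; this packaging is reusable beyond the present lemma. What the paper's version buys is brevity and minimal apparatus. One point worth noting: the interpretive step you flag as the ``main obstacle'' (reading $\overline{\nabla}_\eta \eta$ as the acceleration of an admissible variation with boundary in $L$, so that its $NL$-component is $\text{I\!I}^L(\eta,\eta)$) is not an extra cost of your approach --- the paper's proof relies on the same convention, since it needs $\eta$ extended tangent to $L$ both for $[T,\eta] \in TL$ and for the identity $\eta\langle T, J\eta \rangle = 0$.
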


\begin{proof} Along $\partial \Sigma$, we have that $T$ and $\eta$ are tangent to $L$, so that $[T,\eta]$ must be tangent to $L$.  On the other hand, since $L$ is Lagrangian, $J\eta$ is normal to $L$.  Therefore,
\begin{equation}
\langle \overline{\nabla}_T\eta - \overline{\nabla}_\eta T, J\eta \rangle = \langle [T,\eta], J\eta \rangle = 0.
\label{eq:Tangency}
\end{equation}
Consequently,
\begin{align*}
\langle \nabla^\perp_T\eta, J\eta \rangle = \langle \overline{\nabla}_T\eta, J\eta \rangle = \langle \overline{\nabla}_\eta T, J\eta \rangle = -\langle T, \overline{\nabla}_\eta(J\eta) \rangle = -\langle T, J\overline{\nabla}_\eta\eta \rangle & = \langle JT, \overline{\nabla}_\eta \eta \rangle \\
& = -\langle \nu, \nabla_\eta \eta \rangle,
\end{align*}
where the second equality uses (\ref{eq:Tangency}), the third equality uses $\eta(\langle T, J\eta \rangle) = 0$, the fourth equality uses the nearly-K\"{a}hler condition $(\overline{\nabla}_XJ)(X) = 0$, and the last equality uses that the holomorphicity of $u$ implies $JT = -\nu$.
\end{proof}

\indent From Lemma \ref{lem:L3} and (\ref{eq:SecondVarV2}), and recalling that $\langle P(X,Y),Z \rangle = (\overline{\nabla}_X\omega)(Y,Z) = \frac{1}{3}d\omega(X,Y,Z)$ for all $X,Y,Z \in TM$, we arrive at our desired formula:

\begin{thm} \label{thm:SecondVarMain} Let $u \colon \Sigma^2 \to M^6$ be a compact holomorphic curve in a nearly-K\"{a}hler $6$-manifold with boundary $u(\partial \Sigma) \subset L$ for a Lagrangian submanifold $L \subset M$.  For any normal vector field $\eta \in \Gamma(N\Sigma)$ that is tangent to $L$ along $\partial \Sigma$, the second variation of area is
$$(\delta^2A)(\eta) = \int_\Sigma \frac{1}{2} \Vert \mathscr{D}\eta \Vert^2 + \frac{1}{3} d\omega(e, J\eta, \mathscr{D}_{e}\eta)  - 2\lambda^2 \Vert \eta \Vert^2$$
where $\mathscr{D}$ is the operator defined in (\ref{eq:D-Op}), where $\lambda \geq 0$ is the type constant (\ref{eq:ConstantType}), and where $e \in \Gamma(T\Sigma)$ is any unit tangent vector field.
\end{thm}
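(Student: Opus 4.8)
The plan is to assemble the stated formula from the classical second variation together with the three preceding lemmas, each of which activates one leg of the trinity (\emph{$M$ nearly-K\"ahler}, \emph{$u$ holomorphic}, \emph{$L$ Lagrangian}). First I would start from the standard free-boundary second variation formula (\ref{eq:SecondVar0}), valid for any minimal surface, which writes $(\delta^2 A)(\eta)$ as the bulk integral of $\Vert \nabla^\perp \eta \Vert^2 - \Vert W\eta \Vert^2 - \overline{R}(\eta, e_i, e_i, \eta)$ plus the boundary integral of $\langle \overline{\nabla}_\eta \eta, \nu \rangle$. Since $u$ is holomorphic and $M$ is nearly-K\"ahler, the zeroth-order terms collapse: invoking Lemma \ref{lem:L1} --- whose proof rests on the nearly-K\"ahler curvature identity (\ref{eq:KeyCurvId}), the constant-type relation (\ref{eq:ConstantType}), and the shape-operator symmetries (\ref{eq:SymShapeOp}) --- replaces $\Vert W\eta \Vert^2 + \overline{R}(\eta, e_i, e_i, \eta)$ by $-R^\perp(e_1, e_2, \eta, J\eta) + 2\lambda^2 \Vert \eta \Vert^2$, yielding the intermediate formula (\ref{eq:SecondVarV1}).

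Next I would treat the gradient term $\Vert \nabla^\perp \eta \Vert^2$ together with the curvature term $R^\perp(e_1, e_2, \eta, J\eta)$. The key device is the $J$-antilinear operator $\mathscr{D}$ of (\ref{eq:D-Op}) together with the auxiliary $1$-form $\alpha_\eta(X) = \langle \nabla^\perp_X \eta, J\eta \rangle$. Lemma \ref{lem:L2} rewrites the sum of these two terms as $\frac{1}{2}\Vert \mathscr{D}\eta \Vert^2 + \langle P(e_1, J\eta), \mathscr{D}_{e_1}\eta \rangle + d\alpha_\eta(e_1, e_2)$; the whole point of isolating the exact piece $d\alpha_\eta$ is that, upon integrating over $\Sigma$, Stokes' theorem converts it into the boundary integral of $\alpha_\eta(T) = \langle \nabla^\perp_T\eta, J\eta \rangle$, where $T$ is the positively-oriented unit tangent to $\partial \Sigma$. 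Combining with (\ref{eq:SecondVarV1}) produces formula (\ref{eq:SecondVarV2}), whose total boundary integrand is $\langle \overline{\nabla}_\eta \eta, \nu \rangle + \langle \nabla^\perp_T\eta, J\eta \rangle$.

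The decisive --- and most delicate --- step is showing this boundary integrand vanishes identically, which is precisely Lemma \ref{lem:L3} and the only place the Lagrangian hypothesis is used. I would argue that along $\partial \Sigma$ both $T$ and the admissible field $\eta$ are tangent to $L$, hence so is $[T,\eta]$, while $L$ Lagrangian forces $J\eta$ normal to $L$; this yields the tangency identity (\ref{eq:Tangency}). A short chain then collapses the two boundary terms against one another: trade $\nabla^\perp_T\eta$ for $\overline{\nabla}_\eta T$ via (\ref{eq:Tangency}), differentiate the orthogonality $\langle T, J\eta \rangle = 0$, apply the defining nearly-K\"ahler symmetry $(\overline{\nabla}_\eta J)(\eta) = 0$, and finish with the holomorphicity relation $JT = -\nu$ at the boundary. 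This is the crux, and the feature with no counterpart in the general minimal case: it is exactly the simultaneous use of all three geometric conditions that annihilates the boundary contribution, leaving \emph{no boundary integral}.

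Finally, having discarded the boundary term, I would rewrite the interior quantity $\langle P(e_1, J\eta), \mathscr{D}_{e_1}\eta \rangle$ through the identity $\langle P(X,Y), Z \rangle = (\overline{\nabla}_X\omega)(Y,Z) = \frac{1}{3}d\omega(X,Y,Z)$, and appeal to the frame-independence established in the remark following Lemma \ref{lem:L2} (so that any unit field $e$ may replace $e_1$), arriving at the stated formula. The only genuine obstacle is the boundary cancellation of Lemma \ref{lem:L3}; everything else is bookkeeping once the operator $\mathscr{D}$ and the form $\alpha_\eta$ are in place.
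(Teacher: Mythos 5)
Your proposal is correct and follows essentially the same route as the paper: starting from the classical formula (\ref{eq:SecondVar0}), applying Lemma \ref{lem:L1} to get (\ref{eq:SecondVarV1}), applying Lemma \ref{lem:L2} plus Stokes' theorem to get (\ref{eq:SecondVarV2}), cancelling the boundary integrand via Lemma \ref{lem:L3}, and finally rewriting $\langle P(e_1,J\eta), \mathscr{D}_{e_1}\eta\rangle$ as $\frac{1}{3}\,d\omega(e,J\eta,\mathscr{D}_e\eta)$ using frame-independence. Nothing is missing; your identification of Lemma \ref{lem:L3} as the only place the Lagrangian hypothesis enters matches the paper exactly.
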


\subsection{The Riemann-Roch Theorem}

\indent \indent From Theorem \ref{thm:SecondVarMain}, we see that any admissible vector field $\eta \in \Gamma(N\Sigma)$ with $\mathscr{D}\eta = 0$ will satisfy $(\delta^2A)(\eta) \leq 0$, with equality only if $\lambda = 0$ or $\eta = 0$.  Therefore, if $\lambda \neq 0$, then the Morse index of $u$ is at least 
the Fredholm index of $\mathscr{D}$ on admissible variations.  This latter quantity can be computed by means of a well-known index formula for real Cauchy-Riemann operators.  Recalling this formula (Theorem \ref{thm:Riemann-Roch}) is the aim of this section.  The material here is by now standard; our rather brief discussion is drawn from \cite[Appendix C]{mcduff2012j} and \cite[$\S$2.3, $\S$3.4]{wendl2010lectures}.  



\subsubsection{Connections on Complex Vector Bundles}

\indent \indent Let $E \to \Sigma$ be a real vector bundle over a real manifold $\Sigma$.  Recall that a \textit{(real) connection} on $E$ is an $\mathbb{R}$-linear operator
$$\nabla \colon \Gamma(E) \to \Gamma(\text{Hom}_{\mathbb{R}}(T\Sigma, E)) = \Gamma(\Lambda^1(\Sigma; \mathbb{R}) \otimes_{\mathbb{R}} E)$$
that satisfies the Leibniz rule
\begin{equation}
\nabla(f\xi) = f\nabla \xi + df \otimes \xi \label{eq:Leibniz}
\end{equation}
for all $f \in C^\infty(\Sigma; \mathbb{R})$ and all $\xi \in \Gamma(E)$.  From now on, let us suppose that $E \to \Sigma$ has the structure of a complex bundle, say $J \colon E \to E$.  A real connection $\nabla$ is said to \textit{preserve $J$} if $\nabla_X(J\xi) = J\nabla_X \xi$ for all $X \in T\Sigma$ and $\xi \in \Gamma(E)$.

\indent Let $\text{Hom}_{\mathbb{C}}(T\Sigma^{\mathbb{C}}, E) \to \Sigma$ denote the vector bundle whose fiber at $p \in \Sigma$ consists of the $(i,J)$-linear maps $T_p\Sigma^{\mathbb{C}} \to E_p$.  Each fiber carries an obvious $\mathbb{C}$-module structure via $i \cdot \alpha := J\circ \alpha = \alpha \circ i$.  With this understood, we recall that a \textit{complex connection} on $(E,J)$ is a $\mathbb{C}$-linear operator
$$\nabla \colon \Gamma(E) \to \Gamma(\text{Hom}_{\mathbb{C}}(T\Sigma^{\mathbb{C}}, E)) = \Gamma(\Lambda^1(\Sigma; \mathbb{C}) \otimes_{\mathbb{C}} E)$$
that satisfies the Leibniz rule (\ref{eq:Leibniz}) for all $f \in C^\infty(\Sigma; \mathbb{C})$ and $\xi \in \Gamma(E)$.  Of course, there is a natural bijection between complex connections and $J$-preserving real connections. \\

\indent We now suppose that $(\Sigma, j)$ is a Riemann surface.  Let $\text{Hom}^+(T\Sigma, E)$ and $\text{Hom}^-(T\Sigma; E)$ denote the vector bundles over $\Sigma$ whose fiber at $p \in \Sigma$ consists of the $(j,J)$-linear and $(j,J)$-antilinear maps $T_p\Sigma \to E_p$, respectively.  Explicitly,
\begin{align*}
\text{Hom}^+(T\Sigma, E)|_p & = \{A \in \text{Hom}_{\mathbb{R}}(T_p\Sigma, E_p) \colon J \circ A = A \circ j\} \\
\text{Hom}^-(T\Sigma, E)|_p & = \{A \in \text{Hom}_{\mathbb{R}}(T_p\Sigma, E_p) \colon J \circ A = -A \circ j\}.
\end{align*}
Given a real connection $\nabla$ on $(E,J) \to (\Sigma,j)$, we define the operators
\begin{align*}
\partial^\nabla \colon \Gamma(E) & \to \Gamma( \text{Hom}^+(T\Sigma, E)) & \overline{\partial}^\nabla \colon \Gamma(E) & \to \Gamma( \text{Hom}^-(T\Sigma, E)) \\
\partial^\nabla\xi & := \frac{1}{2}\left(\nabla \xi - J \circ \nabla \xi \circ j \right) & \overline{\partial}^\nabla \xi & := \frac{1}{2}\left(\nabla \xi + J \circ \nabla \xi \circ j\right)
\end{align*}
In the case of the trivial complex line bundle $(\underline{\mathbb{C}},i) \to \Sigma$, the exterior derivative $d \colon C^\infty(\Sigma; \mathbb{C}) \to  \Omega^1(\Sigma; \mathbb{C}) = \Gamma( \Lambda^1(\Sigma; \mathbb{R}) \otimes_{\mathbb{R}} \underline{\mathbb{C}}) = \Gamma(\text{Hom}_{\mathbb{R}}(T\Sigma, \underline{\mathbb{C}}))$ is a real connection that preserves $i$.  The corresponding operators are denoted
\begin{align*}
\partial \colon C^\infty(\Sigma; \mathbb{C}) & \to \Gamma( \text{Hom}^+(T\Sigma, \underline{\mathbb{C}})) & \overline{\partial} \colon C^\infty(\Sigma; \mathbb{C}) & \to \Gamma( \text{Hom}^-(T\Sigma, \underline{\mathbb{C}})) \\
\partial f & := \frac{1}{2}\left(df - i\,df \circ j \right) & \overline{\partial}f & := \frac{1}{2}\left(df + i\,df \circ j\right)
\end{align*}
\noindent Finally, we remark that $\text{Hom}^+(T\Sigma, \underline{\mathbb{C}}) \cong \Lambda^{1,0}(\Sigma)$ and $\text{Hom}^-(T\Sigma, \underline{\mathbb{C}}) \cong \Lambda^{0,1}(\Sigma)$.  More generally,
\begin{align*}
\text{Hom}^+(T\Sigma, E) & \cong \text{Hom}_{\mathbb{C}}(T^{1,0}\Sigma, E^{1,0}) \cong \Lambda^{1,0}(\Sigma) \otimes_{\mathbb{C}} E \\
\text{Hom}^-(T\Sigma, E) & \cong \text{Hom}_{\mathbb{C}}(T^{0,1}\Sigma, E^{1,0}) \cong \Lambda^{0,1}(\Sigma)\otimes_{\mathbb{C}} E.
\end{align*}

\subsubsection{Cauchy-Riemann Operators}

\indent \indent Given a complex bundle $(E,J) \to \Sigma$ over a Riemann surface $(\Sigma,j)$, the operators of the form $\overline{\partial}^\nabla$ for some complex connection $\nabla$ are particularly important.  They are called \textit{complex Cauchy-Riemann operators} (or \textit{$\overline{\partial}$-operators} or \textit{holomorphic structures}), and their Fredholm indices are the subject of the classical Riemann-Roch Theorem.  To be precise:

\begin{defn} An $\mathbb{R}$-linear operator $\mathcal{D}_0 \colon \Gamma(E) \to \Gamma(\text{Hom}^-(T\Sigma, E))$ is called a \textit{complex Cauchy-Riemann operator} if either of the following equivalent conditions hold: \\
\indent (i) For all $f \in C^\infty(\Sigma; \mathbb{R})$ and all $\xi \in \Gamma(E)$:
\begin{align*}
\mathcal{D}_0(f\xi) & = f\,\mathcal{D}_0\xi + \overline{\partial}f \otimes \xi & \mathcal{D}_0(J\xi) & = J\,\mathcal{D}_0\xi.
\end{align*}
\indent (ii) There exists a $J$-preserving real connection $\nabla$ on $E$ for which $\mathcal{D}_0 = \overline{\partial}^\nabla$.
\end{defn}

\indent For certain geometric applications --- especially those involving holomorphic curves in (non-integrable) almost-complex manifolds --- one often needs to consider the wider class of $\overline{\partial}^\nabla$ operators for which $\nabla$ does not necessarily preserve the complex structure $J$.  Such operators are called:

\begin{defn} An $\mathbb{R}$-linear operator $\mathcal{D} \colon \Gamma(E) \to \Gamma(\text{Hom}^-(T\Sigma, E))$ is called a \textit{real Cauchy-Riemann operator} if any of the following equivalent conditions hold: \\
\indent (i) For all $f \in C^\infty(\Sigma; \mathbb{R})$ and $\xi \in \Gamma(E)$:
$$\mathcal{D}(f\xi) = f\,\mathcal{D}\xi + \overline{\partial}f \otimes \xi.$$
\indent (ii) There exists a real connection $\nabla$ on $E$ for which $\mathcal{D} = \overline{\partial}^\nabla$. \\
\indent (iii) There exist a section $\alpha \in \Gamma(E^* \otimes_{\mathbb{R}} \text{Hom}^-(T\Sigma, E))$ and a complex Cauchy-Riemann operator $\mathcal{D}_0$ on $E$ for which $\mathcal{D} = \mathcal{D}_0 + \alpha$.
\end{defn} 

\indent Finally, recall that a subbundle $F \subset E|_{\partial \Sigma}$ is \textit{totally real} if each fiber is a totally real subspace, meaning that $J(F_p) \cap F_p = \{0\}$ for all $p \in \partial \Sigma$.  In this case, we define
\begin{equation}
W^{k,p}_F(E) := \{\xi \in W^{k,p}(E) \colon \xi(\partial \Sigma) \subset F\}. \label{eq:TotRealBdry}
\end{equation}
Now, on a complex vector bundle $(E,J) \to (\Sigma, j)$ equipped with a totally real subbundle $F \subset E|_{\partial \Sigma}$ and a Cauchy-Riemann operator $\mathcal{D}$, we let $\mathcal{D}_F \colon W^{k,p}_F(E) \to W^{k-1,p}(\text{Hom}^-(T\Sigma, E))$ denote the restriction of $\mathcal{D}$ to $W^{k,p}_F(E) \subset W^{k,p}(E)$.  It turns out that the boundary condition (\ref{eq:TotRealBdry}) is Fredholm, and the Fredholm index of $\mathcal{D}_F$ can be calculated via the following Riemann-Roch formula:

\begin{thm}[\cite{mcduff2012j}] \label{thm:Riemann-Roch} Let $E \to \Sigma$ be a complex vector bundle of complex rank $r$ over a compact Riemann surface $\Sigma$ with boundary, and let $F \subset E|_{\partial \Sigma}$ be a totally real subbundle.  Let $\mathcal{D}$ denote a real Cauchy-Riemann operator on $E$ of class $W^{\ell-1,p}$, where $\ell \in \mathbb{Z}^+$ and $p > 1$ satisfy $\ell p > 2$.  Then the real Fredholm index of $\mathcal{D}_F$ is
$$\mathrm{Ind}(\mathcal{D}_F) = r\chi(\Sigma) + \mu(E,F),$$
where $\chi(\Sigma)$ is the Euler characteristic and $\mu(E,F)$ is the boundary Maslov index.
\end{thm}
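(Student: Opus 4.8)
The plan is to prove the formula by the standard Fredholm-theoretic strategy: reduce the computation to a topological normal form using homotopy invariance and additivity of the index, and then evaluate the index on that normal form. The foundational analytic input is that the totally real boundary condition (\ref{eq:TotRealBdry}) is elliptic, so that $\mathcal{D}_F \colon W^{k,p}_F(E) \to W^{k-1,p}(\mathrm{Hom}^-(T\Sigma, E))$ is a Fredholm operator and its $\mathbb{R}$-linear index is well defined. Granting the elliptic estimate for $\overline{\partial}$-type operators with totally real boundary data, the index is locally constant in the pair $(\mathcal{D}, F)$, hence a homotopy invariant depending only on discrete topological data.

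First I would carry out the reductions. By part (iii) of the definition of a real Cauchy--Riemann operator, write $\mathcal{D} = \mathcal{D}_0 + \alpha$ and deform $\alpha \mapsto t\alpha$ for $t \in [0,1]$; the zeroth-order term $\alpha$ is a relatively compact perturbation, so the index is unchanged and we may assume $\mathcal{D} = \mathcal{D}_0 = \overline{\partial}^\nabla$ is a \emph{complex} Cauchy--Riemann operator, i.e.\ $\nabla$ preserves $J$. Over a Riemann surface every such $\overline{\partial}$-operator is automatically integrable (Koszul--Malgrange), so $(E, \mathcal{D}_0)$ is a genuine holomorphic bundle. Since $\Sigma$ deformation-retracts onto a $1$-complex, $E$ is $\mathbb{C}$-trivial, and after a homotopy of $F$ the index depends only on the isomorphism class of the bundle pair $(E,F) \to (\Sigma, \partial\Sigma)$, classified by the homotopy class of the boundary map into the totally real Grassmannian $\mathrm{GL}(r,\mathbb{C})/\mathrm{GL}(r,\mathbb{R})$.

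The model computation I would then perform via the oriented double $\widehat{\Sigma} = \Sigma \cup_{\partial\Sigma} \overline{\Sigma}$, a closed Riemann surface of genus $g$ with $\chi(\widehat{\Sigma}) = 2\chi(\Sigma)$. The totally real datum $F$ is exactly the gluing needed to assemble $E$ and its conjugate into a holomorphic bundle $\widehat{E} \to \widehat{\Sigma}$ carrying an anti-holomorphic involution $\sigma$; by the standard compatibility of the Maslov index with this construction, $c_1(\widehat{E}) = \mu(E,F)$. A Schwarz-type reflection across $\partial\Sigma$ identifies $\ker \mathcal{D}_F$ and $\mathrm{coker}\,\mathcal{D}_F$ with the real (i.e.\ $\sigma$-fixed) points of $H^0(\widehat{\Sigma}, \widehat{E})$ and $H^1(\widehat{\Sigma}, \widehat{E})$, whose real dimensions equal the respective complex dimensions. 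Therefore $\mathrm{Ind}(\mathcal{D}_F) = \dim_{\mathbb{C}} H^0 - \dim_{\mathbb{C}} H^1 = \mathrm{Ind}_{\mathbb{C}}(\overline{\partial}_{\widehat{E}})$, and classical Riemann--Roch on $\widehat{\Sigma}$ gives $\mathrm{Ind}_{\mathbb{C}}(\overline{\partial}_{\widehat{E}}) = c_1(\widehat{E}) + r(1-g) = c_1(\widehat{E}) + \tfrac{r}{2}\chi(\widehat{\Sigma})$. Substituting $\chi(\widehat{\Sigma}) = 2\chi(\Sigma)$ and $c_1(\widehat{E}) = \mu(E,F)$ yields $\mathrm{Ind}(\mathcal{D}_F) = r\chi(\Sigma) + \mu(E,F)$.

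The main obstacle is the matching in the final step. One must verify carefully that (a) $F$ genuinely glues $E$ to $\overline{E}$ into a holomorphic bundle over the double and that its Chern number equals $\mu(E,F)$ under the convention by which the Maslov index is normalized --- this is where the additivity of both $c_1$ and $\mu$ and their common value as the winding number of $\det_{\mathbb{C}}$ of a framing of $F$ must be invoked, reducing to line bundles if necessary; and (b) the reflection sets up an honest real-linear isomorphism between the kernel and cokernel of the boundary value problem and the $\sigma$-fixed cohomology of $\widehat{E}$, so that no index is lost or doubled along $\partial\Sigma$. Pinning down the attendant factors of $2$ and the sign of $\mu$ is the delicate bookkeeping. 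An approach that sidesteps doubling is to use additivity to reduce to line bundles, build $\Sigma$ from a disk by attaching handles, and compute the index of $\overline{\partial}$ on the disk with the rotating boundary condition $e^{ik\theta/2}\mathbb{R} \subset \mathbb{C}$ (Maslov index $k$) by an explicit Fourier expansion, reading off $\mathrm{Ind} = 1 + k = \chi(D^2) + \mu$ directly.
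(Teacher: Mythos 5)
This theorem is not proved in the paper at all: it is imported verbatim from McDuff--Salamon (Appendix C of \cite{mcduff2012j}, with \cite{wendl2010lectures} as a secondary reference) and used as a black box in the proof of Theorem \ref{thm:LagIndexBound}, so there is no in-paper argument to compare against --- the relevant benchmark is the proof in the cited sources. Measured against that, your sketch is a correct outline of the standard argument and indeed of both known routes: the compact-perturbation/homotopy-invariance step (deforming $\alpha \mapsto t\alpha$ is legitimate since the zeroth-order term is compact from $W^{k,p}$ to $W^{k-1,p}$), Koszul--Malgrange integrability over a one-dimensional base, triviality of $E$ over a surface with boundary, and classification of the pair $(E,F)$ by rank and total boundary Maslov index (noting that re-trivializations can shift the Maslov indices of individual boundary components while preserving their sum), after which the index can be evaluated on a normal form. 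The doubling computation is the route taken in Wendl's notes, and your arithmetic checks out: $\chi(\widehat{\Sigma}) = 2\chi(\Sigma)$, $c_1(\widehat{E}) = \mu(E,F)$, and classical Riemann--Roch gives $c_1(\widehat{E}) + \tfrac{r}{2}\chi(\widehat{\Sigma}) = \mu(E,F) + r\chi(\Sigma)$; McDuff--Salamon instead argue closer to your fallback, via additivity under gluing and the explicit Fourier-mode computation on the disk with boundary condition $e^{ik\theta/2}\mathbb{R}$, where $\mathrm{Ind} = k+1 = \chi(D^2) + \mu$. The one step you should nail down rather than merely flag is the cokernel identification in the doubling argument: Schwarz reflection identifies $\ker \mathcal{D}_F$ and $\mathrm{coker}\,\mathcal{D}_F$ with the $\sigma$-fixed parts of $H^0(\widehat{\Sigma},\widehat{E})$ and $H^1(\widehat{\Sigma},\widehat{E})$ only after the Cauchy--Riemann operator has been chosen $\sigma$-equivariantly (which your homotopy reduction does license), and the cokernel side requires either Serre duality or a direct analysis of the formal adjoint with the complementary boundary condition $JF$ --- this is exactly the bookkeeping you correctly identify as the delicate point, and it is carried out in \cite[Appendix C]{mcduff2012j}.
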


\subsection{The Morse Index}

\indent \indent We now return to the study of holomorphic curves $u \colon \Sigma^2 \to M^6$ with boundary in a Lagrangian $L \subset M$ of a nearly-K\"{a}hler $6$-manifold $M$.  Recall Theorem \ref{thm:SecondVarMain}: For a normal vector field $\eta \in \Gamma(N\Sigma)$ that is tangent to $L$ along $\partial \Sigma$, we have
\begin{equation}
(\delta^2A)(\eta) = \int_\Sigma \frac{1}{2} \Vert \mathscr{D}\eta \Vert^2 + \frac{1}{3} d\omega(e, J\eta, \mathscr{D}_{e}\eta)  - 2\lambda^2 \Vert \eta \Vert^2 \label{eq:SecondVarRef}
\end{equation}
The operator $\mathscr{D} \colon \Gamma(N\Sigma) \to \Gamma(\text{Hom}^-(T\Sigma, N\Sigma))$ appearing in this formula, defined in (\ref{eq:D-Op}), is a real Cauchy-Riemann operator.  The proof of Theorem \ref{thm:LagIndexBound} now follows quickly:


\begin{proof} Suppose $M^6$ is a \textit{strict} nearly-K\"{a}hler $6$-manifold, so that $\lambda \neq 0$.  Let us orthogonally decompose $TL|_{\partial \Sigma} = T(\partial \Sigma) \oplus F$.  In other words, $F = \left.TL\right|_{\partial \Sigma} \cap \left.N\Sigma\right|_{\partial \Sigma}$, and hence is a totally real subbundle of $N\Sigma|_{\partial \Sigma}$.  In this language, the set admissible normal vector fields is
$$\mathcal{A} := \{\eta \in \Gamma(N\Sigma) \colon \eta|_{\partial \Sigma} \in TL\} = \{\eta \in \Gamma(N\Sigma) \colon \eta(\partial \Sigma) \subset F\}.$$
Let $\mathcal{H} := \left\{\eta \in \mathcal{A} \colon \mathscr{D}\eta = 0 \right\}$.  If $\eta \in \mathcal{H}$ and $\eta \not\equiv 0$, then (\ref{eq:SecondVarRef}) shows that 
$$(\delta^2A)(\eta) = -2\lambda^2 \int_\Sigma \Vert \eta \Vert^2 < 0.$$
Thus, the Riemann-Roch Theorem \ref{thm:Riemann-Roch} implies that the Morse index of $u$ satisfies
\begin{align*}
\mathrm{Ind}(u) \geq \dim_{\mathbb{R}}(\mathcal{H}) \geq \text{Ind}(\mathscr{D}_F) & = 2\chi(\Sigma) + \mu(N\Sigma, F) \\
& = \mu(T\Sigma, T(\partial \Sigma)) + \mu(N\Sigma, F) \\
& = \mu(u^*(TM), TL).
\end{align*}
Here, we used that $\mu(T\Sigma, T(\partial \Sigma)) = 2\chi(\Sigma)$ for any compact surface with boundary \cite[$\S$5]{pacini2019maslov}. In the last equality, we used the splittings $u^*(TM) = T\Sigma \oplus N\Sigma$ and $TL|_{\partial \Sigma} = T(\partial \Sigma) \oplus F$ together with the additivity \cite[Appendix C.3]{mcduff2012j} of the boundary Maslov index.
\end{proof}

%
%
%
%
%
%
%
%
%
%
%
%
%

\bibliographystyle{plain}
\bibliography{FBRef}
\Addresses

\end{document}